\definecolor{blue75}{rgb}{0,0,.75}
\definecolor{green75}{rgb}{0,.75,0}
\newcommand{\cb }{}%{\color{blue}}
\newcommand{\parenthezises}[1]{\arabic{#1}}
\begin{document}

\pagestyle{plain} 

\numberwithin{equation}{section}

\newcommand{\longrightharpoonup}{\rm - \!\!\! \rightharpoonup}

\newcommand{\R}{\mathbb{R}}
\newcommand{\CC}{\mathbb{C}}
\newcommand{\N}{\mathbb{N}}

\newcommand{\LOne}{L^{1}(\Omega)}
\newcommand{\LTwo}{L^{2}(\Omega)}
\newcommand{\LTwon}{(L^{2}(\Omega))^n}
\newcommand{\Lq}{L^{q}(\Omega)}
\newcommand{\Lp}{L^{p}(\Omega)}
\newcommand{\Ld}{L^\delta(\Omega)}
\newcommand{\LInf}{L^{\infty}(\Omega)}

\newcommand{\TestF}{C^{\infty}_{{0}}(\Omega)}
\newcommand{\DD}{C_0^\infty[0,T]}

\newcommand{\HOneO}{H^{1}_{0}(\Omega)}
\newcommand{\HTwoO}{H^{2}_{0}(\Omega)}

\newcommand{\HOne}{H^{1}(\Omega)}
\newcommand{\HTwo}{H^{2}(\Omega)}
\newcommand{\HFour}{H^{4}(\Omega)}
\newcommand{\Hl}{H^{l}(\Omega)}

\newcommand{\HmOne}{H^{-1}(\Omega)}
\newcommand{\HmTwo}{H^{-2}(\Omega)}

\newcommand{\TLTwoLTwo}{L^{2}\left([0,T];\LTwo\right)}

\newcommand{\THOneLTwo}{H^{1}\left([0,T];\LTwo\right)}

\newcommand{\THOneHFour}{H^{1}\left([0,T];\HFour\right)}

\newcommand{\TLTwoHOneO}{L^{2}\left([0,T];H^{1}_{0}(\Omega)\right)}
\newcommand{\TLTwoHTwoO}{L^{2}\left([0,T];\HTwoO\right)}
\newcommand{\TLTwoHmOne}{L^{2}\left([0,T];H^{-1}(\Omega)\right)}

\newcommand{\LInfLd}{L^{\infty}([0,\infty);L^{\delta}(\Omega))}
\newcommand{\TLInfLTwo}{L^{\infty}([0,T];\LTwo)}
\newcommand{\LInfQT}{L^{\infty}\left((0,T)\times(\Omega)\right)}

\newcommand{\THOneHOneO}{H^{1}\left([0,T];H^{1}_{0}(\Omega)\right)}

\newcommand{\TCLTwo}{C\left([0,T];\LTwo\right)}
\newcommand{\TCOneLTwo}{C^1\left([0,T];\LTwo\right)}

\newcommand{\LbOne}{L^1_b(\R)}
\newcommand{\LbTwo}{L^2_b(\R)}

\newcommand{\LlocTwo}{L^2_{loc}(\R)}

\newcommand{\TLlocTwoLTwo}{L^2_{loc}(\R,\LTwo)}
\newcommand{\TLbTwoHOne}{L^2_{b}(\R,\HOne)}
\newcommand{\TLlocTwoHmOne}{L^2_{loc}(\R,H^{-1}(\Omega))}
\newcommand{\TLbTwoHmOne}{L^2_{b}(\R,H^{-1}(\Omega))}

\newcommand{\TLbTwoLTwo}{L^2_{b}(\R,\LTwo)}

\newcommand{\X}{\LInf\times W^{1,\infty}(\Omega)}

\newcommand{\dist}{\operatorname{dist}}
\newcommand{\supp}{\operatorname{supp}}
\newcommand{\ess}{\operatorname{ess}}

\newtheorem{Definition}{Definition}%[chapter]
\newtheorem{Corollary}{Corollary}%[chapter]
\newtheorem{Lemma}{Lemma}%[chapter]
\newtheorem*{Lem}{Lemma 1}
\newtheorem{Remark}{Remark}%[chapter]
\newtheorem{Proposition}{Proposition}%[chapter]
\newtheorem{Theorem}{Theorem}%[chapter]
\newtheorem{Notation}{Notation}%[chapter]

\theoremstyle{definition}
\newtheorem{Example}{Example}%[chapter]

%\theoremstyle{remark}

% %%%% 
% \newenvironment{rcases}{% 
%   \left.\renewcommand*\lbrace.% 
%   \begin{cases}}% 
% {\end{cases}\right\rbrace}
% %%%%

\newcommand{\Keywords}[1]{\par\noindent 
{\small{\em {\bfseries Keywords}\/}: #1}}

\title{On an exponential attractor for a class of PDEs with degenerate diffusion and chemotaxis
%in one dimension
}
\author{Messoud Efendiev\\
\small{Helmholtz Center Munich,
  Institute of Computational Biology}\\
\small{Ingolst\"adter Landstr. 1, 85764 Neuherberg, Germany}\\
\small{e-mail: \href{mailto:messoud.efendiyev@helmholtz-muenchen.de}
{messoud.efendiyev@helmholtz-muenchen.de}}\\\\and\\\\
	Anna Zhigun\\
 \small{Technische Universit\"at Kaiserslautern, Felix-Klein-Zentrum f\"ur Mathematik}\\
\small{Paul-Ehrlich-Str. 31, 67663 Kaiserslautern, Germany}\\
\small{e-mail: \href{mailto:zhigun@mathematik.uni-kl.de}
{zhigun@mathematik.uni-kl.de}}}
\date{}
\maketitle
\begin{abstract} 
In this article we deal with a class of strongly coupled parabolic systems that encompasses two different effects: degenerate diffusion and chemotaxis. Such classes of equations arise in the mesoscale level modeling of biomass spreading
mechanisms via chemotaxis. We show the existence of an exponential attractor and, hence, of a finite-dimensional global attractor under certain 'balance conditions' on the order of the  degeneracy and the growth of the chemotactic function.\\\\
{\bf Keywords}: attractor; biofilm; chemotaxis; degenerate diffusion, longtime dynamics\\\\
2010 Mathematics Subject Classification:
2010 MSC: 35B41, 35B45, 35D30, 35K65 
\end{abstract}

\section{Introduction}
In this work we continue our studies of  the longtime behaviour of a degenerate system modelling a biomass spreading in the presence of chemotaxis which was introduced in \cite{ESe}:
\begin{subequations}\label{PMCT}
\begin{alignat}{3}
	&\partial_tM= \nabla\cdot\left(M^\alpha \nabla M-M^\gamma \nabla\rho\right)-f(M,\rho)&& \quad\text{ in }\R^+\times\Omega,\label{EqI}\\
	&\partial_t\rho=\Delta\rho-g(M,\rho)&& \quad\text{ in }\R^+\times\Omega,\label{EqII}\\
	&M=0,\quad  \rho=1 && \quad\text{ in }\R^+\times\partial\Omega, \\
	&M =M_{0},\quad \rho = \rho_{0} && \quad\text{ in }\{0\}\times\Omega,\label{rho0}
\end{alignat}
\end{subequations}
where $\Omega$ is smooth bounded domain in $\R^N$, $N\in\{1,2,3\}$, $\alpha$ and $\gamma$ are two positive constants satisfying certain conditions (we call them 'balance conditions') to be specified below, 
and $M_{0}\in L^{\infty}(\Omega)$, $\rho_{0}\in W^{1,\infty}(\Omega)$ are nonnegative functions.

Equations \eqref{EqI}-\eqref{EqII} 
% -either posed in the whole space or equipped with the homogeneous Neumann boundary conditions- 
can model, e.g., the spreading of a bacterial population under the influence of chemotaxis. Chemotaxis systems have been much studied  in the recent decades. We refer the interested reader to surveys \cite{Horstmann,BBTW} which cover both modelling and analytical aspects. The available results mostly focus on the uniform boundedness/blow-up for finite/infinite times and convergence of solutions to an   equilibrium. It is usually assumed that $\alpha=0$, $\gamma=1$, and very specific  nonlinearities $f$ and $g$ are chosen. For example, in the case of the classic Keller-Segel model \cite{Horstmann} one has $f\equiv0$, i.e., the absence of proliferation which is not realistic in general. Furthermore, the condition $\alpha=0$ corresponds to the standard non-degenerate diffusion. It is has a well known property of the infinite speed of propagation which entails that the population fills the domain instantaneously. Particularly in the case of a bacterial biofilm this falls short to model the experimentally { and numerically observed \cite{eberl2017spatially}} moving fronts. Thus, it was proposed in \cite{ESe} to consider rather general nonlinearities $f$ and $g$, thus allowing to model reaction/interaction, and to take $\alpha>0$. The latter corresponds to the case of a degenerate diffusion (that is, the diffusion coefficient has at least one zero point) of the porous medium type. It is well known (see, e.g., \cite{Vazquez}) that such diffusion leads to solutions with a finite speed of propagation. 

From the analytical point of view, system \eqref{PMCT} is a blend of a porous medium equation with a chemotaxis growth system. The dynamics of a single equation with a porous medium degeneracy has been thoroughly studied in \cite[Chapter 4]{bookMe} with the help of exponential  attractors (see {\it Definition \ref{Definition III.1}} below). For non-degenerate chemotaxis growth systems under the homogeneous Neumann boundary conditions the existence of  attractors and  their dimension  were studied in \cite{EfNakOs2008,NakOsa2013,EfNakWen1,EfNakWen2,EfNakWen3,AidaTsuYagi1,EfYagi1}, see also \cite[Section 3.6]{bookMe}. 
For system \eqref{PMCT} the well-posedness and the existence of the global attractor were established in \cite{ESe,EZh} and \cite{ESZ,EZ2,EZDE1D},  respectively, (see also \cite[Chapters 9-10]{bookMe} and \cite{DisserAZ}). The question of finiteness of the attractor dimension has not as yet  been   studied. We address this issue in the present paper. The answer turns out to be positive under suitable conditions on the problem coefficients. { Thus, to the best of our knowledge,  system \eqref{PMCT} is the second after the porous medium equation}  class of highly degenerate problems which can exhibit finite-dimensional dynamics.

It is well known that the concept of  global attractor has some essential drawbacks. It is in general not stable under perturbations, the speed of convergence to it may be arbitrary slow, and it is usually hard to express it in terms of the parameters of the system. Thus it is often difficult to observe the global attractor in numerical simulations. The notion of exponential attractor (compare {\it Definition \ref{Definition III.1}} below) was first introduced in \cite{EFNT} as an alternative way to capture the dynamics of a dynamical system. It is a finite dimensional positively invariant attracting set that attracts bounded subsets of the phase space with exponential speed.
If such a set exists for a dynamical system, it necessarily contains the global attractor of the system, and that global attractor has finite dimension (so it is also a way to show that the global attractor is finite-dimensional). While generally stable and easier to handle, this (eventually bigger) attracting set has it's own faulty: it is not uniquely determined (while the global attractor is). 

Unlike the nondegenerate dissipative equations and systems on bounded domains which, as a rule, possess  finite-dimensional global and exponential attractors, the dynamics of degenerate problems is much more delicate.  
The porous medium and  $p$-Laplace equations are two very first examples  of autonomous equations which -under rather general conditions- have infinite dimensional
attractors, see \cite{EfZelik,EfOt1,EfOt2}, also \cite[Chapters 4-7]{bookMe}. Moreover, the asymptotics of their  Kolmogorov $\varepsilon$-entropy turned out to be  polynomial. Even the attractors of nondegenerate problems in unbounded domains, which are known  to be of infinite dimension, always showed only logarithmic asymptotics of the entropy, see, e.g.,  \cite{EfFInIfin}. 
% 
% The available results on degenerate equations reveal the crucial dependence of the dynamics upon the choice of the system coefficients at the levels at which a degeneracy occurs in the diffusion. 

Previous results  on the well-posedness and the existence of the  global attractor were obtained for \eqref{PMCT} under the balance conditions 
\begin{align}
	\frac{\alpha}{2}+1\leq\gamma<\alpha\label{bc}
\end{align}
and the following assumptions on the nonlinearities $f$ and $g$: 
\begin{align}
	&|f(M,\rho)|\leq \Cl[F]{F_1}(1+|M|^\xi)^{\frac{1}{2}} \quad \text{ for all }M,\rho\geq0\quad\text{for some } \xi\in \left[0,\alpha-\gamma+2\right),\ \Cr{F_1}\in\R^+_0,\label{absf}\\
	&f(M,\rho)\geq\Cl[F]{F_2} M-\Cl[F]{F_3}\quad \text{ for all }M,\rho\geq0\quad\text{for some } \Cr{F_2}\in\R^+,\ \Cr{F_3}\in\R^+_0,\label{fM}\\
% 	&{f(M,\rho)\geq -\Cl[F]{F_1}M^{\Cl[F]{beta}}-\Cl[F]{F_2},\, \Cr{F_1},\Cr{F_2}\in\R^+_0,\Cr{beta}\in[0,\alpha+1)},\label{fM}\\
	&g(M,\rho)=\Cl[G]{G_1}\rho+g_2(\rho)M,\ |g_2(\rho)|\leq \Cl[G]{G_2}\quad \text{ for all }M,\rho\geq0\quad\text{for some } \Cr{G_1},\Cr{G_2}\in\R_0^+\label{g22},\\
% \end{align}
%and, in order to ensure the uniqueness and the non-negativity of solutions for non-negative initial data, let
%\begin{align}
&f(M,\rho)=\Cl[F]{F_4}M+\widetilde{f}\left(M^{1+\frac{\alpha}{2}},\rho\right)\quad \text{ for all }M,\rho\geq0\quad\text{for some }\Cr{F_4}\in\R,\label{tildef}\\
&\widetilde{f}\in W^{1,\infty}_{loc}(\R^2),\ g_2\in W^{1,\infty}_{loc} (\R),\,f(0,\rho)=0\quad\text{for all }\rho\in\R,\quad g_2(0)\leq 0.\label{g2}	
\end{align}
In this setting, we  established in \cite{ESZ}  the existence of the weak global attractor in the phase space $\LInf\times W^{1,\infty}(\Omega)$. Note that in \cite[Section 4.4]{bookMe} it was shown that the dimension of the global attractor for the porous medium equation (thus, even without chemotaxis) 
\begin{align}
	{\partial_tM}= \nabla\cdot\left(M^\alpha \nabla M\right)-f(M)\label{PME}
\end{align}
may be infinite if $-f'(0)>0$. Observe that this includes the case of the standard logistic growth, i.e., when
\begin{align}
 -f(M)=rM\left(1-\frac{M}{K}\right)
\end{align}
for some growth rate $r>0$ and carrying capacity $K>0$. 
Conditions \eqref{bc}-\eqref{g2}   therefore  cannot {\cb guarantee}
 the finite dimension of the global attractor (and hence also the existence of an exponential attractor)  for \eqref{PMCT} { as the following example illustrates.
\begin{Example}
Let 
\begin{align*}
 &-f=-f(M)=M,\qquad g\equiv 0.
\end{align*}
Observe that this choice of $f$ and $g$ is in line with conditions \eqref{bc}-\eqref{g2}. Let us assume further that 
\begin{align*}
\rho_0\equiv 1.
\end{align*}
In this special case equation  \eqref{EqII} together with the corresponding boundary condition can be easily solved explicitly, the solution being $\rho\equiv1$. Hence, the taxis flux in \eqref{EqI} completely vanishes on $\Omega$. As a result, $M$ solves the porous medium equation \eqref{PME} with $-f'(0)=1>0$ thus leading to the infinite dimensional global attractor already for $M$-component.  
\end{Example}
}
We improve  conditions  \eqref{bc}-\eqref{g2} in the following way: we consider now sharper balance conditions than (\ref{bc}), namely
\begin{align}
	1+\frac{\alpha}{2}<\gamma<\alpha\label{newbc}
\end{align}
and replace \eqref{tildef} by 
\begin{align}
&f(M,\rho)=\Cl[F]{F_5}M+\widetilde{f}\left(M^{\beta},\rho\right),\quad\text{for some } \beta>1+\frac{\alpha}{2},\ \Cr{F_5}>0.\label{tildefnew}
\end{align}
The following choice of functions $f$ and $g$ satisfies  conditions \eqref{absf}-\eqref{g22}, \eqref{g2} and \eqref{tildefnew}:
\begin{Example}
\begin{align}	f(M,\rho)&=-M+\frac{M^{\beta}}{M^{\beta}+1}{\arctan\rho},\nonumber\\
g(M,\rho)&=\rho+M\frac{\rho}{\rho+1}.\nonumber
\end{align}
\end{Example}\noindent
We recall a definition of the exponential attractor: 
\begin{Definition}[{\cite[Chapter 3, Definition 3.1]{bookMe}}]\label{Definition III.1}A set ${\cal M}$ is an
exponential attractor\index{exponential attractor} for a semigroup $S(t)$ in a Banach space $X$ if: it
\begin{enumerate}[(i)]
	\item is compact in $X$;
	\item is positively invariant, i.e., $S(t){\cal
M}\subset {\cal M},\ \forall t\ge 0$;
	\item attracts bounded sets of
initial data exponentially fast in the following sense: there exists a monotonic
function $Q$ and a constant $C_{rate}>0$ (called below attraction parameters) such that
$$\forall B\subset X\ \text{bounded},\ \dist_{X}(S(t)B,{\cal M})\le
Q({{\Vert B\Vert }_X}){e^{-C_{rate} t}},\ t\ge 0;  \label{3.4}
$$
\item has finite fractal dimension.
\end{enumerate}
\end{Definition}
Here $\dist_X(\cdot,\cdot)$ denotes the  nonsymmetric Hausdorff distance
between subsets of $X$:
\begin{align*}
 \dist_X(A,B):=\underset{x\in A}{\sup}\underset{y\in B}{\inf}||x-y||_X\text{ for all }A,B\subset X.
\end{align*}
Our main result deals with the existence of exponential attractors for system  \eqref{PMCT}. It reads:
\begin{Theorem}[Exponential attractor for \eqref{PMCT}]\label{mainexpo} Let $\Omega$ be a smooth bounded domain in $\R^N$, $N\in\{1,2,3\}$.
Let the functions $f$ and $g$ satisfy the assumptions \eqref{absf}-\eqref{g22}, \eqref{g2} and \eqref{tildefnew}
and let the given constants $\alpha$ and $\gamma$ satisfy
$1+\frac{\alpha}{2}<\gamma<\alpha$. Then the initial boundary value problem \eqref{PMCT} generates a  well defined semigroup $S(t)$, $t\geq0$, in the (positive cone of the) space $\X$. The semigroup $S(t)$ possess an exponential attractor ${\cal M}$ (in terms of {\it Definition \ref{Definition III.1}}) which is a bounded subset of $C^{\theta}(\overline{\Omega})\times C^{2+\theta}(\overline{\Omega})$ for some H\"older exponent $\theta\in(0,1)$. The number $\theta$ and such parameters of the attractor as: its diameter, fractional dimension, the attraction parameters  $C_{rate}$ and  $Q$ can be chosen such as to depend  only upon the parameters of the problem.
\end{Theorem}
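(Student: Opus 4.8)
The plan is to place the semigroup $S(t)$ into the abstract construction of exponential attractors from \cite[Chapter 3]{bookMe}: that construction yields $\mathcal M$ as soon as one exhibits (a) a compact absorbing set $\mathcal B$; (b) a smoothing property for the discrete map $\mathbb S:=S(t^\ast)$ on $\mathcal B$, i.e. a splitting $\mathbb S u_1-\mathbb S u_2=\mathbb L(u_1,u_2)+\mathbb K(u_1,u_2)$ with $\mathbb L$ a strict contraction in a (possibly weaker) norm and $\mathbb K$ mapping $\mathcal B$ into a space compactly embedded in $\X$; and (c) H\"older continuity of $(t,u)\mapsto S(t)u$ on $[0,t^\ast]\times\mathcal B$. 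The well-posedness of \eqref{PMCT}, the nonnegativity of solutions, the generation of $S(t)$ on the positive cone of $\X$, and the existence there of a bounded absorbing set are inherited from \cite{ESe,EZh,ESZ}, whose balance assumptions are implied by $1+\frac\alpha2<\gamma<\alpha$. Following those works, the degeneracy of \eqref{EqI} is treated through the variable $u=M^{1+\frac\alpha2}$, for which the diffusion energy $\int M^\alpha|\nabla M|^2$ equals a constant multiple of $\int|\nabla u|^2$; all a priori estimates are phrased in terms of $(u,\rho)$.

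Next I would upgrade the absorbing set to a compact one. Since $\rho$ solves the nondegenerate equation $\partial_t\rho=\Delta\rho-G_1\rho-g_2(\rho)M$ with $\rho\equiv1$ on $\partial\Omega$ and with $M$ uniformly bounded in $\LInf$ on the absorbing set, parabolic bootstrapping using \eqref{g22} and \eqref{g2} shows that $\rho(t)$ enters and remains in a bounded subset of $C^{2+\theta}(\overline{\Omega})$ for some $\theta\in(0,1)$; in particular $\nabla\rho$ is then uniformly bounded, so the chemotactic flux $M^\gamma\nabla\rho$ is a bounded lower-order perturbation of the porous-medium operator. The intrinsic H\"older estimates for porous-medium-type equations with bounded forcing (DiBenedetto) then give that $M(t)$ enters and remains in a bounded subset of $C^{\theta}(\overline{\Omega})$. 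This produces the set $\mathcal B$, bounded in $C^{\theta}(\overline{\Omega})\times C^{2+\theta}(\overline{\Omega})$ and compact in $\X$; every constant obtained so far depends only on the data of \eqref{PMCT}, which is ultimately what makes $\theta$ and the attractor parameters data-dependent only.

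The core of the argument is the smoothing property for two trajectories $(M_i,\rho_i)$, $i=1,2$, starting in $\mathcal B$; write $w:=M_1-M_2$ and $\sigma:=\rho_1-\rho_2$. For the chemical, $\sigma$ solves the linear Dirichlet problem $\partial_t\sigma=\Delta\sigma-G_1\sigma-(g_2(\rho_1)M_1-g_2(\rho_2)M_2)$, whose last term is bounded in $\LTwo$ by $C(\|w\|_{\LTwo}+\|\sigma\|_{\LTwo})$ via \eqref{g22} and \eqref{g2}; parabolic smoothing then gives a bound for $\|\sigma(t)\|_{\HOneO}$ that decays exponentially up to a compactly regularising remainder driven by $\int_0^t\|w(s)\|_{\LTwo}^2\,ds$. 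For the biomass I would test the difference of the $M$-equations with $(-\Delta_D)^{-1}w$. The diffusion part produces $-\int(\Psi(M_1)-\Psi(M_2))\,w\le-c\int|M_1^{1+\frac\alpha2}-M_2^{1+\frac\alpha2}|^2$, where $\Psi(s)=s^{\alpha+1}/(\alpha+1)$ and one uses the elementary inequality $(b^{\alpha+1}-a^{\alpha+1})(b-a)\ge c(b^{1+\frac\alpha2}-a^{1+\frac\alpha2})^2$ for $a,b\ge0$; the linear reaction part $F_5\,w$ produces $-F_5\|w\|_{H^{-1}}^2$, which is the source of the exponential contraction and where the assumption $F_5>0$ in \eqref{tildefnew} is indispensable (compare the Example preceding it); and the remaining terms, arising from the taxis flux and from $\widetilde f(M_1^\beta,\rho_1)-\widetilde f(M_2^\beta,\rho_2)$, are handled via the bound $|M_1^\gamma-M_2^\gamma|+|M_1^\beta-M_2^\beta|\le C\,|M_1^{1+\frac\alpha2}-M_2^{1+\frac\alpha2}|$ on bounded sets (valid since $\gamma,\beta\ge1+\frac\alpha2$) together with the $\sigma$-bound above. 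The aim is a Gronwall estimate on $[0,t^\ast]$, with $t^\ast$ large, of the form $\|w(t^\ast)\|_{H^{-1}}\le\tfrac12\|w(0)\|_{H^{-1}}+(\text{a term bounded in a space compactly embedded in }\X)$, i.e. the decomposition $\mathbb S=\mathbb L+\mathbb K$; the required H\"older continuity of $S$ in $t$ and Lipschitz continuity in the data on $\mathcal B$ follow from the same estimates combined with the regularity of $\mathcal B$.

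I expect the main obstacle to be exactly the closing of this last inequality in the degenerate regime, namely ensuring that the genuine contraction rate $F_5>0$ is not swallowed by the constants produced by the chemotactic term and by $\widetilde f$. This requires splitting the relevant integrals over the region $\{M_1\vee M_2\le\delta\}$, where the strict inequalities $\gamma,\beta>1+\frac\alpha2$ make the corresponding prefactors small and these terms are dominated by the diffusion dissipation, and over its complement, where the diffusion operator is uniformly elliptic; a careful accounting of the $\sigma$-contribution is needed as well. If a pointwise-in-time smoothing turns out to be too delicate near the free boundary of $M$, the same, now time-integrated, estimates allow one to run the construction on the space of $\ell$-trajectories and invoke the abstract theorem in that setting. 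Once (a)--(c) are in place, \cite[Chapter 3]{bookMe} yields an exponential attractor $\mathcal M_0$ for $\mathbb S$ restricted to $\mathcal B$; setting $\mathcal M:=\overline{\bigcup_{0\le t\le t^\ast}S(t)\mathcal M_0}$ and using the H\"older continuity in $t$ upgrades it to an exponential attractor for $S(t)$ on the positive cone of $\X$, contained in a bounded subset of $C^{\theta}(\overline{\Omega})\times C^{2+\theta}(\overline{\Omega})$, with $\theta$ and all parameters depending only on the data of \eqref{PMCT} --- which is the assertion of Theorem \ref{mainexpo}.
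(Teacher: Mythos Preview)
Your overall strategy and many technical ingredients match the paper: the absorbing set in $C^{\theta}\times C^{2+\theta}$, testing the $M$-difference with $(-\Delta)^{-1}W$ in $H^{-1}$, the role of $F_5>0$ for dissipativity, the splitting into degenerate and nondegenerate regions, and the use of $\gamma,\beta>1+\tfrac{\alpha}{2}$ to make the bad prefactors small. The final passage from the discrete to the continuous-time attractor and the interpolation from $H^{-1}\times L^2$ to $L^\infty\times W^{1,\infty}$ are also the same.

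The genuine gap is in your formulation of the smoothing property. You aim for the \emph{basic} version: a decomposition $\mathbb S u_1-\mathbb S u_2=\mathbb L+\mathbb K$ with $\mathbb L$ contractive in some fixed weaker norm and $\mathbb K$ landing in a \emph{fixed} space compactly embedded in $\X$. The paper explicitly notes (see the discussion around \eqref{basic}) that such a property is in general unattainable for porous-medium-type degeneracies: one cannot choose the spaces $H$ and $H_1$ once and for all. The compact remainder in your estimate lives on $\{M_1\vee M_2>\delta\}$, a set that changes with the pair $(u_1,u_2)$, so there is no single target space into which $\mathbb K$ maps; the abstract theorem you quote therefore does not apply as stated. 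The paper instead uses the generalised, \emph{localised} smoothing property of \cite{EfZelik} (cf.\ \cite[Remark 4.3]{bookMe}): one fixes a reference point $u_0=(M_0,\rho_0)\in\mathcal B$, restricts to $u_{10},u_{20}$ in an $\varepsilon$-ball around $u_0$, and introduces $u_0$-dependent spaces $Y_{u_0}^{(\delta)}$, $Z_{u_0}^{(\delta)}$ built on the fixed open set $\{M_0>\delta\}$. The smoothing property then reads as \eqref{SP}, and one must additionally verify that the embeddings $Z_{u_0}^{(\delta/2)}\hookrightarrow Y_{u_0}^{(\delta)}$ are \emph{uniformly} compact over $u_0\in\mathcal B$; this is done via cutoff functions satisfying \eqref{cutoff}, whose derivative bounds are controlled through \eqref{dist} by the common H\"older bound on $\mathcal B$. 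Your sketch does not contain this localisation step nor the uniform-compactness verification, and the $\ell$-trajectory fallback does not by itself remove the difficulty, since the compactness defect is spatial (at the free boundary), not temporal.
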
\noindent
As a direct corollary of {\it {\cb Theorem \ref{mainexpo}}} we have that
\begin{Corollary}[Finite-dimensional global attractor for \eqref{PMCT}]
 Under assumptions of {\it {\cb Theorem \ref{mainexpo}}} the semigroup generated by system \eqref{PMCT} possesses the finite-dimensional global attractor ${\cal A}\subset {\cal M}$. In particular, upper bounds for the attractor diameter and fractal dimension can be chosen to depend upon the parameters of the problem alone. 
\end{Corollary}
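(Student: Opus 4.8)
The plan is to read this corollary directly off \emph{Theorem~\ref{mainexpo}} via the standard abstract relation between exponential and global attractors, as presented in \cite[Chapter~3]{bookMe}. First I would note that the exponential attractor ${\cal M}$ supplied by \emph{Theorem~\ref{mainexpo}} is a bounded subset of $C^{\theta}(\overline{\Omega})\times C^{2+\theta}(\overline{\Omega})$, hence relatively compact in $X=\X$ because of the compact embeddings $C^{\theta}(\overline{\Omega})\hookrightarrow\LInf$ and $C^{2+\theta}(\overline{\Omega})\hookrightarrow W^{1,\infty}(\Omega)$; being in addition closed in $X$ by item (i) of \emph{Definition~\ref{Definition III.1}}, it is compact in $X$, positively invariant, and attracts every bounded subset of $X$ exponentially fast.

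Next I would set ${\cal A}:=\omega({\cal M})$, the $\omega$-limit set of ${\cal M}$ with respect to the semigroup,
\[
 {\cal A}:=\omega({\cal M})=\bigcap_{s\geq0}\overline{\bigcup_{t\geq s}S(t){\cal M}},
\]
and verify the defining properties of the global attractor. Nonemptiness and compactness of ${\cal A}$ follow from the compactness of ${\cal M}$; strict invariance $S(t){\cal A}={\cal A}$ uses the continuity of each map $S(t)\colon X\to X$, which is part of the well-posedness asserted in \emph{Theorem~\ref{mainexpo}}; and the attraction of an arbitrary bounded $B\subset X$ is inherited from the attraction and positive invariance of ${\cal M}$ by the usual two-step argument, giving $\dist_X(S(t)B,{\cal A})\to 0$. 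Moreover, since ${\cal M}$ is closed and positively invariant, $\overline{\bigcup_{t\geq s}S(t){\cal M}}\subset{\cal M}$ for every $s\geq0$, so intersecting over $s$ yields ${\cal A}\subset{\cal M}$. By uniqueness of the global attractor, ${\cal A}$ is \emph{the} global attractor of \eqref{PMCT} in $X$ (consistent with, and contained in, the weak global attractor obtained in \cite{ESZ}).

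Finally, I would invoke the monotonicity of the fractal dimension and of the diameter under set inclusion: from ${\cal A}\subset{\cal M}$ we obtain $\dim_f({\cal A})\leq\dim_f({\cal M})<\infty$ and $\operatorname{diam}_X({\cal A})\leq\operatorname{diam}_X({\cal M})$. Since \emph{Theorem~\ref{mainexpo}} guarantees that upper bounds for $\dim_f({\cal M})$ and $\operatorname{diam}_X({\cal M})$ can be chosen to depend on the parameters of the problem alone, the same conclusion transfers to ${\cal A}$.

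Because \emph{Theorem~\ref{mainexpo}} carries all the analytic weight, I do not expect a genuine obstacle here; the only point requiring a little care is to confirm that the abstract implication ``exponential attractor $\Rightarrow$ finite-dimensional global attractor with ${\cal A}\subset{\cal M}$'' applies in the strong topology of $X$ — namely that ${\cal M}$ is compact in $X$ itself (not merely in a coarser topology) and that each $S(t)$ is continuous on $X$ — both of which are already contained in the conclusions of \emph{Theorem~\ref{mainexpo}}.
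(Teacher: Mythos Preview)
Your proposal is correct and matches the paper's approach: the paper states this result as an immediate corollary of \emph{Theorem~\ref{mainexpo}} without giving a separate proof, relying on the general fact (recalled in the introduction) that an exponential attractor necessarily contains the global attractor, which is then automatically finite-dimensional. Your write-up simply spells out this standard implication; note that compactness of ${\cal M}$ in $X$ is already part of \emph{Definition~\ref{Definition III.1}}(i), so your compact-embedding step is a harmless redundancy.
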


There are different constructions \cite{bookMe} of exponential attractors. We use the one based on the so-called smoothing property { (see \cite{bookMe} and references therein).} 
One of its simplest abstract versions insures the existence of an exponential attractor for a discrete semigroup $S^n$, $n\in\N_0$, and takes (see, e.g., \cite[Chapter 3]{bookMe}) the form 
\begin{align}
 \|S(u_1)-S(u_2)\|_{{\cb H_1}}\leq K \|u_1-u_2\|_{H}\text{ for all }u_1,u_2\in C.\label{basic}
\end{align}
Here $H$ and $H_1$ are two Banach spaces such that $H_1$ is compactly embedded
in $H$, $S$ maps between $H$ and  $H_1$, $C$ is a subset of some metric space $X$ and is invariant under $S$, and the constant $K\geq0$ is independent of a particular choice of $u_1$ and $u_2$. It is in general not difficult to establish such a property for the semigroup corresponding to a dissipative nondegenerate problem. Moreover, in these cases the ways to choose  spaces $H$ and ${\cb H_1}$ in an appropriate way are usually in abundance. Very often they are two H\"older spaces or, alternatively, a Lebesgue and a Sobolev space, defined for the whole spatial domain. However, a condition like \eqref{basic}  is in general unattainable  for a semigroup of solution operators for a degenerate equation, such  {\cb as} e.g. the porous medium equation. In \cite{EfZelik,bookMe} the smoothing property could be  generalised to a form that allows to treat the latter case. It turned out that the underlying spaces, such as $H$ and $H_1$, cannot be chosen once and for all, but that they need to be changed as one passes from a neighbourhood of one point $u_0$ in $C$ to another. Thereby, it is necessary to work on functional spaces set up not on the whole spatial domain, but, rather, on some sub- and superlevel sets of $u_0$. This requires localising techniques. In the present work we use the ideas which were originally developed in  \cite{EfZelik,bookMe} for the porous medium equation in order to obtain an exponential attractor  for system \eqref{PMCT}.  The presence of a chemotaxis transport term in addition to a degenerate diffusion is a considerable complication. It further reduces the class  of norms in which one can estimate the differences of two solutions. For example, while the solution operators of the porous medium equation are Lipschitz continuous both in $L^1$ (this was essentially used in \cite{EfZelik,bookMe}) and $H^{-1}$, in our case they are Lipschitz  only in $H^{-1}$. Working in negative Sobolev spaces is more difficult since they are much less suited for the localising techniques.

The rest of the paper is organised as follows. In {\it Section \ref{Prelims}} we fix some notation and then establish some results concerning the regularity and stability of solutions, as well as some properties of an exponentially absorbing set for system \eqref{PMCT}. In {\it Section \ref{secsmooth}} we formulate and prove a smoothing property ({\it Theorem \ref{theoSP}} below) for the corresponding semigroup. 
The proof of {\it Theorem \ref{mainexpo}} is given in {\it Section~\ref{mainproof}}.
\section{Preliminaries}\label{Prelims}
% \noindent Some preparation is needed before we can give the proof of {\it Theorem~\ref{theoSP}}. 
In this Section we collect some necessary preliminary observations and results. 
\subsection*{Basic notation and functional spaces}\label{not}
We denote $\R^+:=(0,\infty)$, $\R^+_0:=[0,\infty)$. 

Partial derivatives in the  classical or distributional sense with respect to a variable  $z$ are denoted by $\partial_{z}$.   Further, $\nabla$  and $\Delta$ stand for the spatial gradient and Laplace operator{\cb s}, respectively.

We assume the reader to be familiar with the standard $L^p$, Sobolev, and H\"older spaces and their usual properties, as well as with the  more general $L^p$ spaces of functions with values in general Banach spaces, and with anisotropic spaces, such as, for any open $O\subset\R^N$ and $0<t_1<t$, the  parabolic spaces
\begin{align}
&H^1( (t_1,t),H^1_0(O),H^{-1}(O)):=\left\{u\in L^2((t_1,t),H^1_0(O))|\ \partial_t u\in L^2((t_1,t),H^{-1}(O))\right\}\nonumber
\end{align}
equipped with the norm
\begin{align}
&\|u\|_{H^1((t_1,t),H^1_0(O),H^{-1}(O))}:=\left(\|u\|_{L^2((t_1,t),H^1_0(O))}^2+\|\partial_t u\|_{L^2((t_1,t),H^{-1}(O))}^2\right)^{\frac{1}{2}}\nonumber
\end{align}
and
\begin{align}
 &W^{(1,2),2}((t_1,t)\times O):=\left\{u\in L^2((t_1,t),H^2(O))|\ \partial_t u\in L^2((t_1,t),L^2(O))\right\}\nonumber
 \end{align}
equipped with the norm
 \begin{align}
 &\|u\|_{W^{(1,2),2}((t_1,t)\times O)}:=\left(\|u\|_{L^2((t_1,t),H^2(O))}^2+\|\partial_t u\|_{L^2((t_1,t),L^2(O))}^2\right)^{\frac{1}{2}}.\nonumber
\end{align}

{ As usual, $C^k(\overline{\Omega})$, $k\in\N_0$, denotes the space of  $k$ {\cb times} continuously differentiable functions $u:\overline{\Omega}\rightarrow\R$, and $D^{\alpha}=D^{\alpha_1}\dots D^{\alpha_n}$, $\alpha_i\in\N_0$, is the corresponding higher order partial derivative of order $|\alpha|=\sum_{i=1}^{N}\alpha_i$. A norm on $C^k(\overline{\Omega})$ is given by
\begin{align}
 \|u\|_{C^k(\overline{\Omega})}:=\underset{|\alpha|\leq k}{\sum}\max_{x\in \overline{\Omega}}|D^{\alpha}u(x)|.\nonumber
\end{align}
} We recall that a H\"older coefficient for a H\"older exponent $\theta\in(0,1)$ and a real-valued function $w$ defined in a set $A\subset \R^k$, $k\in\N$, is given by 
\begin{align*}
 |w|_{C^{\theta}(A)}:=\underset{x,y\in A,\ x\neq y}{\sup}\frac{|w(x)-w(y)|}{|x-y|^{\theta}}.
\end{align*}
{ This allows to introduce the standard H\"older spaces for $k\in\N_0$:
\begin{align}
 C^{k+\theta}(\overline{\Omega}):=\left\{u\in C^k(\overline{\Omega}):\ |D^{\alpha}u|_{C^{\theta}(\overline{\Omega})}<\infty\text{ for all }|\alpha|=k\right\}\nonumber
\end{align}
equipped with the norm
\begin{align}
 \|u\|_{C^{k+\theta}(\overline{\Omega})}:=\|u\|_{C^k(\overline{\Omega})}+\underset{|\alpha|=k}{\sum}|D^{\alpha}u|_{C^{\theta}(\overline{\Omega})}.\nonumber
\end{align}
}

Recall also that due to the Sobolev interpolation inequality for any  $\theta\in(0,1)$ there exist {\cb numbers $\theta_1,\theta_2\in(0,1)$} and $\Cl{Cinterp}>0$ such that following interpolation inequalities hold:
\begin{align}
 &\|w\|_{\LInf}\leq \Cr{Cinterp}\|w\|_{C^{\theta}(\overline{\Omega})}^{1-{\cb \theta_1}}\|w\|_{\HmOne}^{{\cb \theta_1}}\quad \text{for all }w\in C^{\theta}(\overline{\Omega}),\label{SobInter}\\
 &\|v\|_{W^{1,\infty}(\Omega)}\leq \Cr{Cinterp}\|v\|_{C^{2+\theta}(\overline{\Omega})}^{1-{\cb \theta_2}}\|v\|_{\LTwo}^{{\cb \theta_2}}\quad \text{for all }v\in  C^{2+\theta}(\overline{\Omega}).\label{SobInter2}
\end{align} 

Finally, we make the following two useful conventions. Firstly, for all indices $i$,  $C_i$  denotes a positive constant or, alternatively, a positive continuous function. 
Secondly, the statement that a constant depends on the parameters of the problem means that it depends upon such parameters as:  space dimension $N$, domain $\Omega$,  constants $\alpha$, $\beta$, $\gamma$, $F_i$, $G_i$, 
% and $\theta$ (see below), 
and 
norms of $\widetilde{f}$ and $g_2$. This dependence  is subsequently {\bf not} indicated in an explicit way.

\subsection*{Sub- and superlevel sets}
In what follows we  sometimes consider parts of solutions of problem \eqref{PMCT} restricted to the sublevel sets
% $\left\{M_0\leq\delta\right\}$ and 
$\left\{M_0>\delta\right\}$ for  $\delta\in\left(0,\|M_0\|_{\LInf}\right)$. Observe that if $M_0\in C^{\theta}(\overline{\Omega})$, then, in fact, for all $0<\delta<\frac{1}{2}\|M_0\|_{\LInf}$
\begin{align}	d_{\R}\left(\left\{M_0\leq \delta\right\},\left\{M_0\geq 2\delta\right\}\right)\geq \delta^{\frac{1}{\theta}}|M_0|_{C^{\theta}(\overline{\Omega})}^{-\frac{1}{\theta}},\label{dist}
\end{align}
where $d_{\R}$ denotes the standard metric distance between sets in $\R$:
$$
d_{\R}(X,Y):=\underset{(x,y)\in X\times Y}{\inf}|x-y|.
$$
Thus, we have a control over a lower bound for the distance between sub- and superlevel sets, and that bound depends only upon the quantities which appear on the right-hand side of \eqref{dist}. 
An important consequence of this observation is the existence for all $0<\delta_0<\delta_1<\frac{1}{2}\|M_0\|_{\LInf}$ of a smooth cut-off function $\varphi$ which satisfies the following: 
\begin{subequations}\label{cutoff}
\begin{align}
	&\varphi\in C^{\infty}_0(\Omega),\\
	&\varphi\in [0,1] \text{ in }\overline{\Omega},\quad \varphi=0 \text{ in }\left\{M_0<\delta_0\right\},\quad \varphi=1 \text{ in } \left\{M_0> {\cb \delta_1}\right\},\\
	&\left|D^k\varphi(x)\right|\leq \Cl{Cphi}\varphi^{1-\omega}(x) \text{ for all }x\in \overline{\Omega} \text{ for all } \omega \in (0,1) \text{ and } k\in\N,\label{nablaphi}\\
	&\Cr{Cphi}=\Cr{Cphi}\left(|M_0|_{C^{\theta}(\overline{\Omega})},\delta_0,\delta_1,\theta,\omega,k\right).
\end{align}
\end{subequations}
A family of functions with such properties exists for all $M_0\in C^{\theta}(\overline{\Omega})$ due to  property \eqref{dist}, see Proposition 1.1 of \cite{bookMe}. %With help of such cut-off functions we prove
% % \begin{Lemma}
% %  There exist continuous functions $U:\R^+\times\R^+\rightarrow\R^+$ and  $t:\R^+\rightarrow\R^+$ such that \begin{align*}
% % t(\delta)\underset{\delta\rightarrow0}{\rightarrow}0,\qquad U(\delta,t(\delta))\underset{\delta\rightarrow0}{\rightarrow}0                                                                                                                                                                                                                                     \end{align*} 
% %  and it holds that
% % \begin{align}
% % 	&\sup\left\{M(s,x)|\, (M_0,\rho_0)\in{\cal B}, \, s\in[0,t],\, x\in \left\{|M_0|\leq\delta\right\}\right\}\leq U(\delta,t);\label{sup}
% % \end{align}
% % %  There exist  a continuous function $U:\R^+_0\rightarrow\R^+$ depending only on the parameters of the problem and such that:
% % \end{Lemma}
\subsection*{Regularity of solutions}
From now on we assume that assumptions of {\it {\cb Theorem~\ref{mainexpo}}} are fulfilled. 
{\cb A solution to \eqref{PMCT} can be defined as follows:
\begin{Definition}[Weak solution]\label{defsol}
Let $(M_{0},\rho_{0})\in L^{\infty}(\Omega)\times W^{1,\infty}(\Omega)$. 
We call a pair of functions $M,\rho:\R^+_0\times\overline{\Omega}\rightarrow\R^+_0$  a global weak solution of
 \eqref{PMCT} if for all $0<T<\infty$ it holds that
\begin{enumerate}[(i)]
\item $M\in L^{\infty}\left((0,T)\times\Omega\right)$, $M^{\alpha+1} \in
 L^{2}\left((0,T);\HOneO\right)$, $\partial_t{M}\in L^{2}\left((0,T);\HmOne\right)$;
\item $\rho\in L^{\infty}((0,T);W^{1,\infty}(\Omega))$, $\partial_t\rho\in L^{2}\left((0,T);\HmOne\right)$;
\item $(M,\rho)$ satisfies  equations \eqref{EqI}-\eqref{EqII} in $L^{2}\left((0,T);\HmOne\right)$;
\item $(M,\rho)(0)=(M_0,\rho_0)$ in $\HmOne\times L^2(\Omega)$.
% \item $(M,\rho)$ satisfies  equation \eqref{EqII} in the mild sense:
% \begin{align}
% \rho(t)-1 =& e^{t\Delta}(\rho_{0}-1)-\int_{0}^t e^{(t-s)\Delta} g(M(s),\rho(s)) \,ds\quad\text{ in }L^2(\Omega)\text{ for }t\in[0,T]. \nonumber
% \end{align}
% Here $e^{t\Delta}$ denotes the usual heat semigroup generated by the Laplace operator under homogeneous Dirichlet boundary conditions.
\end{enumerate}
\end{Definition}
\noindent It was proved in \cite{ESe, EZh} (see also \cite[Section 3.2]{DisserAZ}) that for all $(M_{0},\rho_{0})\in L^{\infty}(\Omega)\times W^{1,\infty}(\Omega)$ the initial boundary value problem \eqref{PMCT} possess a unique solution with regularity as stated in {\it Definition \ref{defsol}} and, moreover, this solution is  uniformly bounded in $\X$. In general, a  solution of a degenerate equation like \eqref{EqI} is only weak and not classical}  \cite{Vazquez}. Still, it is well-understood \cite{AroBen1979,DiBen83,CaffFri1980,Ivanov86,Ziemer1982} that under reasonable conditions on the equation coefficients bounded weak solutions are H\"older continuous. In our case the following regularity result holds:
\begin{Lemma}[Regularity and positivity]\label{LemmaHoel}
 Let $(M_0,\rho_0)\in\X$ with $\|(M_0,\rho_0)\|_{\X}\leq R$ for some $R>0$ and let $(M,\rho):[0,T]\times\overline{\Omega}\rightarrow\R^+_0\times\R^+_0$ be the corresponding weak solution  to \eqref{PMCT}. Then:
 \begin{enumerate}
  \item(H\"older regularity)  There exists a number $\theta=\theta(R)\in(0,1)$   such that $(M,\rho)$ belongs to $C^{\frac{\theta}{2},\theta}(\R^+\times \overline{\Omega})\times C^{1+\frac{\theta}{2},2+\theta}(\R^+\times \overline{\Omega})$, and for all $0<\tau<T$ it holds that
 \begin{align}
 \|(M,\rho)\|_{C^{\frac{\theta}{2},\theta}([\tau,T]\times \overline{\Omega})\times C^{1+\frac{\theta}{2},2+\theta}([\tau,T]\times \overline{\Omega})}\leq \C\left(\tau,T,R\right).\label{HoelMrho}
\end{align}
\item(Preservation of positivity) $\R^+\times\{M_0>0\}\subset\{M>0\}$, and for all $\delta\in \left(0,\|M_0\|_{\LInf}\right)$ and $T>0$ it holds that
\begin{align}
 \inf\left\{M(s,x)|\ t\in[0,T],\,x\in\left\{M_0>\delta\right\}\right\}\geq \Cl{C26}(\delta,T,R).\label{MPos1}
\end{align}
\item(Regularity on sublevel sets) $M\in C^{1,2}\left(\R^+\times{\cb \overline{\{M_0>\delta\}}}\right)$, and for all $\delta\in \left(0,\|M_0\|_{\LInf}\right)$ and $0<\tau<T$ it holds that
\begin{align}
 \|M\|_{C^{1,2}\left([\tau,T]\times\overline{\{M_0>\delta\}}\right)}\leq \C\left(\delta,\tau,T,R\right).\label{MC12}
\end{align}
 \end{enumerate}
\end{Lemma}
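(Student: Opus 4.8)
\textbf{Proof proposal for Lemma \ref{LemmaHoel}.}

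The plan is to assemble the three assertions from known interior and boundary regularity results for degenerate parabolic equations of porous medium type, combined with standard linear parabolic theory for the $\rho$-equation, and to bootstrap between the two equations. First I would record that by the already-cited well-posedness result of \cite{ESe,EZh} the solution $(M,\rho)$ is uniformly bounded in $\X$ by a constant depending only on $R$; in particular $M$ is a bounded weak solution of the degenerate equation \eqref{EqI} and $\rho$ is bounded in $L^\infty((0,T);W^{1,\infty}(\Omega))$. I would then treat the $M$-component as a bounded weak solution of a quasilinear degenerate equation $\partial_t M=\nabla\cdot(M^\alpha\nabla M-M^\gamma\nabla\rho)-f(M,\rho)$ whose coefficients (through $\nabla\rho$ and $f$) are controlled in $L^\infty$; the De Giorgi--Nash--Moser-type theory for such equations (DiBenedetto \cite{DiBen83}, Ivanov \cite{Ivanov86}, Ziemer \cite{Ziemer1982}, and the boundary regularity of \cite{AroBen1979,CaffFri1980}) yields a H\"older exponent $\theta=\theta(R)\in(0,1)$ and the interior-in-time estimate \eqref{HoelMrho} for $M$ in $C^{\theta/2,\theta}([\tau,T]\times\overline\Omega)$, with constant depending only on $\tau,T,R$ and the data. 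Observe that homogeneity of the boundary condition $M=0$ is what makes the boundary H\"older estimate clean here.

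With $M\in C^{\theta/2,\theta}$ in hand, the second equation \eqref{EqII}, $\partial_t\rho=\Delta\rho-\Cr{G_1}\rho-g_2(\rho)M$ (using \eqref{g22}), becomes a \emph{linear} uniformly parabolic equation for $\rho$ with a H\"older-continuous right-hand side and smooth boundary condition $\rho=1$; classical Schauder theory then upgrades $\rho$ to $C^{1+\theta/2,2+\theta}([\tau,T]\times\overline\Omega)$ with the asserted bound, possibly after shrinking $\theta$. This closes the H\"older regularity claim (i). Claim (ii), the preservation of positivity, I would obtain from the strong maximum principle / Harnack-type estimates for degenerate equations: on any sublevel set $\{M_0>\delta\}$ one has a strictly positive lower bound for $M_0$, and comparison with a suitable subsolution of the porous medium operator (a barrier that stays positive on a slightly smaller set and for all $t\le T$, exploiting finite speed of propagation only to the extent that it does not matter since we keep away from $\{M_0=0\}$) gives the quantitative bound \eqref{MPos1} with $\Cr{C26}(\delta,T,R)>0$; the key point is that on such a set the diffusion is \emph{non-degenerate}, $M^\alpha\ge\Cr{C26}^\alpha>0$, so a standard linear minimum principle argument applies once the barrier is set up.

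Finally, for claim (iii) I would use the cut-off functions $\varphi$ from \eqref{cutoff}: fix $0<\delta_0<\delta_1<\delta$ and multiply, in effect, the equation for $M$ by $\varphi$ to localise to a neighbourhood of $\overline{\{M_0>\delta\}}$ on which, by claim (ii), $M$ is bounded below by a positive constant. There the equation for $M$ (or, more conveniently, for $u:=M^{\alpha+1}$, which satisfies a non-degenerate quasilinear equation since $M\ge\Cr{C26}>0$) is uniformly parabolic with coefficients that are H\"older-continuous by claims (i)--(ii); linear/quasilinear parabolic Schauder estimates then give $M\in C^{1,2}([\tau,T]\times\overline{\{M_0>\delta\}})$ together with the bound \eqref{MC12}, all constants depending only on $\delta,\tau,T,R$ and the problem data. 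I expect the main obstacle to be assertion (ii) done \emph{quantitatively and uniformly}: producing an explicit positive lower bound on $M$ over the sublevel set that depends only on $\delta,T,R$ (and not on the individual solution) requires a careful barrier construction or a quantitative weak Harnack inequality valid up to the initial time $t=0$, and matching the lower bound on $M_0$ coming from $M_0\in C^\theta(\overline\Omega)$ via \eqref{dist} with the parabolic propagation of positivity; the passage near $t=0$, where no smoothing has yet occurred and the only information is the $L^\infty$-bound on $M_0$, is the delicate point, and is the reason the bound in \eqref{MPos1} is allowed to hold on $[0,T]$ rather than on $[\tau,T]$.
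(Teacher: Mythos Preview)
Your treatment of parts (i) and (iii) matches the paper almost exactly: Ivanov's interior/boundary H\"older theory for the degenerate $M$-equation followed by Schauder for $\rho$, and then the change of variables $u=M^{\alpha+1}$ together with interior Schauder estimates on the set where $M$ is bounded away from zero.

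The substantive divergence is in (ii). The paper does \emph{not} use a barrier or a Harnack inequality. Instead it obtains the quantitative lower bound by a Moser-type iteration on $M^{-1}$: one multiplies \eqref{EqI} by $-a\varphi_\delta^a M^{-a-1}$ (with $\varphi_\delta$ the cutoff from \eqref{cutoff}), integrates, and derives a differential inequality for $\|\varphi_\delta^a M^{-a}\|_{L^1(\Omega)}$. Gronwall gives an $L^a$ bound for each finite $a$, and then a standard doubling iteration $a\mapsto 2a$ combined with a Sobolev interpolation inequality passes to $a=\infty$, yielding $\|\varphi_\delta M^{-1}\|_{L^\infty}\le C(\delta,T,R)$. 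The formal computation is justified a posteriori via the nondegenerate approximations from the well-posedness proof. This has two advantages over your sketch: it is fully quantitative from $t=0$ with no appeal to positivity of $M$ at intermediate times (the cutoff $\varphi_\delta$ vanishes on $\{M_0\le\delta\}$, so $\|\varphi_\delta^a M_0^{-a}\|_{L^1}\le |\Omega|\delta^{-a}$ is the only initial input), and the chemotaxis drift $M^\gamma\nabla\rho$ enters the energy identity as a routine lower-order term.

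Your proposal for (ii), by contrast, has a gap you yourself flag. The sentence ``on such a set the diffusion is non-degenerate, $M^\alpha\ge \Cr{C26}^\alpha>0$, so a standard linear minimum principle argument applies'' is circular, since $\Cr{C26}$ is the conclusion. For the pure porous medium equation explicit barriers exist, but you give no construction that accommodates the drift $-\nabla\cdot(M^\gamma\nabla\rho)$ while remaining quantitative in $(\delta,T,R)$. Likewise, an intrinsic Harnack inequality for degenerate equations does propagate positivity, but its constants are geometry- and solution-dependent, and turning it into a bound depending only on $(\delta,T,R)$ valid down to $t=0$ is a genuine piece of work that your outline does not supply. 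The paper's $L^p$-iteration on $M^{-1}$ sidesteps all of this.
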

\begin{proof}
 {\bf 1.} 
 Observe that equation \eqref{EqI} can be written in the following form:
\begin{align*}
 &\partial_t M=\nabla\cdot { A}(t,x,M,\nabla M)+b(t,x),
\end{align*}
where we introduced
\begin{align*}
 &{ A}(t,x,M,p):=M^{\alpha}p{\cb-}M^{\gamma}\nabla \rho(t,x),\\
 &b(t,x):=f(M,\rho)(t,x).
\end{align*}
Clearly, functions ${\cb A}$ and $b$ satisfy the following conditions:
\begin{align}
 &|{ A}(t,x,M,p)|\leq M^{\alpha}|p|+\left\|M^{\gamma}\nabla \rho\right\|_{(L^{\infty}(\R^+\times\Omega))^n},\label{abnd1}\\
 &|b(t,x)|\leq \|f(M,\rho)\|_{L^{\infty}(\R^+\times\Omega)}.\label{bbnd}
\end{align}
{ 
Moreover, with the help of the H\"older inequality we deduce that
\begin{align}
 { A}(t,x,M,p)\cdot p=&M^{\alpha}|p|^2+M^{\gamma}\nabla \rho(t,x)\cdot p\nonumber\\
  = &M^{\alpha}|p|^2+M^{\frac{\alpha}{2}}p\cdot M^{\gamma-\frac{\alpha}{2}}\nabla\rho(t,x)\nonumber\\
  \geq &M^{\alpha}|p|^2-M^{\frac{\alpha}{2}}|p|\cdot M^{\gamma-\frac{\alpha}{2}}|\nabla\rho(t,x)|\nonumber\\
 \geq& \frac{1}{2}M^{\alpha}|p|^2-\frac{1}{2}\left\|M^{\gamma-\frac{\alpha}{2}}\nabla\rho\right\|_{(L^{\infty}(\R^+\times\Omega))^n}^2,\label{Adotp}
\end{align}
}Due to  \eqref{abnd1}-\eqref{Adotp}, $\alpha>0$, $\gamma\geq \frac{\alpha}{2}$, and the fact that $(M,\rho)$ is uniformly bounded in $\X$ by a constant which depends only upon the parameters of the problem and $R$, we can apply Theorems 2.I and 3.I from \cite{Ivanov86} on inner and boundary regularity for degenerate parabolic PDEs. These results  {\cb imply the existence of} a number $\theta=\theta\left(R\right)\in(0,1)$  such that $M\in C^{\frac{\theta}{2},\theta}([\tau,T]\times \overline{\Omega})$ for all $0<\tau<T$, and 
\begin{align}
 \|M\|_{C^{\frac{\theta}{2},\theta}([\tau,T]\times \overline{\Omega})}\leq \C\left(\tau,T,R\right).\label{HoelM}
\end{align}
Consequently, equation \eqref{EqII} together with the boundary condition $\rho\equiv 1$ can be seen as a linear parabolic equation for $\rho$ with H\"older continuous coefficients. Thus, standard Schauder estimates entail that $\rho$ is a classical solution to  \eqref{EqII} and satisfies 
\begin{align}
 \|\rho\|_{C^{1+\frac{\theta}{2},2+\theta}([\tau,T]\times \overline{\Omega})}\leq \C\left(\tau,T,R\right).\label{Hoelrho}
\end{align}
Combining \eqref{HoelM}-\eqref{Hoelrho}, we obtain \eqref{HoelMrho}.

{\bf 2.} { We start by proving the quantitative estimate \eqref{MPos1}. For this purpose we make use of  the classical idea of propagation of $L^p$ bounds. Since we aim at an estimate from {\cb below}, we  estimate $M^{-1}$ from above. Of course, this can only be done in {\cb  those} areas where the $M$-component is strictly bounded from below by a positive constant. {\cb  For that reason} we use a cutoff function  {\cb from  \eqref{cutoff}  in order} to eliminate the {\cb part of $\Omega$} where $M_0$ is small. More precisely, let us} multiply equation \eqref{EqI} by $-a\varphi_{\delta}^a M^{-a-1}$ for ${\cb a\geq2\alpha}$ and $\varphi_{\delta}$ as in \eqref{cutoff} {\cb while} we choose $\delta_0:=\delta$, $\delta_1:=2\delta$, so that, in particular, $\varphi_{\delta}=0$ in $\left\{|M_0|\leq\delta\right\}$. Integrating (formally) over $\Omega$ and using {\cb integration by parts} where necessary we obtain that
\begin{align}	
&\frac{d}{dt}\left\|\varphi_{\delta}^{a} M^{-a}\right\|_{\LOne}\nonumber\\
=&-4\frac{(a+1)a}{(a-\alpha)^2}\left\|\varphi_{\delta}^{\frac{a}{2}}\nabla M^{-\frac{a-\alpha}{2}}\right\|_{\LTwon}^2-2a\frac{a}{a-\alpha}\left(\varphi_{\delta}^{\frac{a}{2}}\nabla M^{-\frac{a-\alpha}{2}},\left(\varphi_{\delta}M^{-1}\right)^{\frac{a}{2}-1}M^{\frac{\alpha-2}{2}}\nabla\varphi_{\delta}\right)_{\LTwon}\nonumber\\	
&+2a\frac{a+1}{a-\alpha}\left(\varphi_{\delta}^{\frac{a}{2}}\nabla M^{-\frac{a-\alpha}{2}},\left(\varphi_{\delta}M^{-1}\right)^{\frac{a}{2}}M^{\gamma-1-\frac{\alpha}{2}}\nabla\rho\right)_{\LTwon}\nonumber\\	
&{\cb-}a^2\left(\left(\varphi_{\delta}M^{-1}\right)^{a-1}M^{\gamma-2},\nabla\varphi_{\delta}\cdot\nabla\rho\right)_{\LTwon}{\cb +}a\left(\varphi_{\delta}^{a} M^{-a},\frac{f{\cb(M,\rho)}}{M}\right)_{\LTwo}.\label{S1}	
\end{align}	
Using the Young and H\"older inequalities,  the assumptions on $\alpha$, $\gamma$, and $f$, and the properties of $\varphi_{\delta}$ we estimate the right-hand side of \eqref{S1} on $[0,T]$ as follows:
\begin{align}	
&\frac{d}{dt}\left\|\varphi_{\delta}^{a} M^{-a}\right\|_{\LOne}\nonumber\\
\leq&-\C\left\|\varphi_{\delta}^{\frac{a}{2}}\nabla M^{-\frac{a-\alpha}{2}}\right\|_{\LTwon}^2+\C(\delta,T,R) a\left\|\varphi_{\delta}^{\frac{a}{2}}\nabla M^{-\frac{a-\alpha}{2}}\right\|_{\LTwon}\left\|\varphi_{\delta}^{a} M^{-a}\right\|_{\LOne}^{\frac{a-2}{2a}}\nonumber\\	
&+\C(\delta,T,R) a\left\|\varphi_{\delta}^{\frac{a}{2}}\nabla M^{-\frac{a-\alpha}{2}}\right\|_{\LTwon}\left\|\varphi_{\delta}^{a} M^{-a}\right\|_{\LOne}^{\frac{1}{2}}+\C(\delta,T,R)a^2\left\|\varphi_{\delta}^{a} M^{-a}\right\|_{\LOne}^{\frac{a-1}{a}}\nonumber\\
&+\C(\delta,T,R)a\left\|\varphi_{\delta}^{a} M^{-a}\right\|_{\LOne}\nonumber\\
\leq& -\Cl{_C23}(\delta,T,R)\left\|\varphi_{\delta}^{\frac{a}{2}}\nabla M^{-\frac{a-\alpha}{2}}\right\|_{\LTwon}^2+\Cl{_SC1}(\delta,T,R)a^2\left(\left\|\varphi_{\delta}^{a} M^{-a}\right\|_{\LOne}+1\right)\nonumber\\
\leq& -\Cr{_C23}(\delta,T,R)\left(\frac{1}{2}\left\|\nabla\left(\varphi_{\delta}^{\frac{a}{2}} M^{-\frac{a-\alpha}{2}}\right)\right\|_{\LTwon}^2-\frac{a^2}{4}\left\|\left(\varphi_{\delta}M^{-1}\right)^{\frac{a}{2}-1}M^{\frac{\alpha-2}{2}}\nabla\varphi\right\|_{\LTwon}^2\right)\nonumber\\
&+\Cr{_SC1}(\delta,T,R)a^2\left(\left\|\varphi_{\delta}^{a} M^{-a}\right\|_{\LOne}+1\right)\nonumber\\
\leq&-\Cl{C23}(\delta,T,R)\left\|\nabla\left(\varphi_{\delta}^{\frac{a}{2}} M^{-\frac{a-\alpha}{2}}\right)\right\|_{\LTwon}^2+ \C(\delta,T,R)a^2\left\|\varphi_{\delta}^{a} M^{-a}\right\|_{\LOne}^{\frac{a-2}{a}}\nonumber\\
&+\Cr{_SC1}(\delta,T,R)a^2\left(\left\|\varphi_{\delta}^{a} M^{-a}\right\|_{\LOne}+1\right)\nonumber\\
\leq&-\Cr{C23}(\delta,T,R)\left\|\nabla\left(\varphi_{\delta}^{\frac{a}{2}} M^{-\frac{a-\alpha}{2}}\right)\right\|_{\LTwon}^2+ \Cl{SC1}(\delta,T,R)a^2\left(\left\|\varphi_{\delta}^{a} M^{-a}\right\|_{\LOne}+1\right).\label{estMm}
\end{align}
% where $C_i=C_i(\delta,T,R)$. 
The first consequence of \eqref{estMm} is due to the Gronwall lemma:	
\begin{align}	
    \underset{t\in[0,T]}{\sup}\left\|\varphi_{\delta}^{a}M^{-a}(t)\right\|_{\LOne}+1\leq & e^{T\Cr{SC1}(\delta,T,R)a^2}\left(\left\|\varphi_{\delta}^{a}M_0^{-a}\right\|_{\LOne}+1\right)\nonumber\\	
    \leq & e^{T\Cr{SC1}(\delta,T,R)a^2}\left(\delta^{-a}+1\right)\nonumber\\	
    =:&\Cl{Ba}(a,\delta,T,R).\label{Mma}	
\end{align}	
This shows the a priori boundedness of $\left\|\varphi_{\delta}M^{-1}\right\|_{\cb L^a(\Omega)}$ for all $a\in{\cb[2\alpha},\infty)$. To get an estimate for $\left\|\varphi_{\delta}M^{-1}\right\|_{\cb \LInf}$ observe that due to the  interpolation inequality for Lebesgue spaces
\begin{align}	
    \left\|\varphi_{\delta}^{a}M^{-a}\right\|_{\LOne}=&\left\|\varphi_{\delta}^{\frac{a}{2}}M^{-\frac{a-\alpha}{2}}M^{-\frac{\alpha}{2}}\right\|_{\LTwo}^2\nonumber\\	
    \leq&\left\|M^{-1}\right\|_{L^{3\alpha}\left(\left\{M_0\geq\delta\right\}\right)}^\alpha\left\|\varphi_{\delta}^{\frac{a}{2}}M^{-\frac{a-\alpha}{2}}\right\|_{L^3(\Omega)}^2\nonumber\\    \leq&\left\|\varphi_{\frac{\delta}{2}}M^{-1}\right\|_{L^{3\alpha}(\Omega)}^\alpha\left\|\varphi_{\delta}^{\frac{a}{2}}M^{-\frac{a-\alpha}{2}}\right\|_{L^3(\Omega)}^2\nonumber\\	
    \leq&\Cr{Ba}^{\alpha}\left(3\alpha,\frac{\delta}{2},T,R\right)\left\|\varphi_{\delta}^{\frac{a}{2}}M^{-\frac{a-\alpha}{2}}\right\|_{L^3(\Omega)}^2.\label{estim}	
\end{align}	
Combining \eqref{estMm} with \eqref{estim}, the Young inequality, and the Sobolev interpolation inequality 
% \cite{Triebel}	
\begin{align}	
        &||u||_{L^3(\Omega)}\leq \Cl{C3}||u||_{\HOneO}^{\frac{4N}{6+3N}}||u||_{\LOne}^{\frac{6-N}{6+3N}}\nonumber	
\end{align}	
 for $u:=\varphi_{\delta}^{\frac{a}{2}}M^{-\frac{a-\alpha}{2}}$ yields	
\begin{align}	
    \frac{d}{dt}\left\|\varphi_{\delta}^{a} M^{-a}\right\|_{\LOne}	
    \leq &-\Cl{SC0_}(\delta,T,R)\left\|\nabla\left(\varphi_{\delta}^{\frac{a}{2}}M^{-\frac{a-\alpha}{2}}\right)\right\|_{\LTwon}^2\nonumber\\ &+\Cl{Ba_1}(\delta,T,R)a^2\left\|\nabla\left(\varphi_{\delta}^{\frac{a}{2}}M^{-\frac{a-\alpha}{2}}\right)\right\|_{\LTwon}^{2\frac{4N}{6+3N}}\left\|\varphi_{\delta}^{\frac{a}{2}}M^{-\frac{a-\alpha}{2}}\right\|_{\LOne}^{2\frac{6-N}{6+3N}}+\Cr{SC1}(\delta,T,R)a^2\nonumber\\    \leq&\Cl{Ba_2}(\delta,T,R)a^{\frac{12+6N}{6-N}}\left\|\varphi_{\delta}^{\frac{a}{2}}M^{-\frac{a-\alpha}{2}}\right\|_{\LOne}^2+\Cr{SC1}(\delta,T,R)a^2\nonumber\\	
\leq&\C(\delta,T,R)a^{\frac{12+6N}{6-N}}\left\|\varphi_{\delta}^{\frac{a}{2}}M^{-\frac{a}{2}}\right\|_{\LOne}^2+\Cr{SC1}(\delta,T,R)a^2\nonumber\\
\leq&\Cl{Ba_3}(\delta,T,R)a^{\frac{12+6N}{6-N}}\left(\left\|\varphi_{\delta}^{\frac{a}{2}}M^{-\frac{a}{2}}\right\|_{\LOne}^2+1\right).\label{estMm_}	
\end{align}	
Integrating \eqref{estMm_} over $[0,T]$ and taking maximum on both sides of \eqref{estMm_1} we obtain  that
\begin{align}	
&\underset{t\in[0,T]}{\max}\left(\left\|\varphi_{\delta}^{a} M^{-a}(t)\right\|_{\LOne}+1\right)\nonumber\\
\leq &	
\left(\left\|\varphi_{\delta}^{a} M_0^{-a}\right\|_{\LOne}+1\right) +\Cr{Ba_3}(\delta,T,R)a^{\frac{12+6N}{6-N}}\int_0^{T}\left\|\varphi_{\delta}^{\frac{a}{2}}M^{-\frac{a}{2}}\right\|_{\LOne}^2+1\,d s\nonumber\\	
\leq &	
    {\cb \left(\delta^{-a}|\Omega|+1\right)} +\Cl{C102}(\delta,T,R)a^{\frac{12+6N}{6-N}}\underset{t\in[0,T]}{\max}\left(\left\|\varphi_{\delta}^{\frac{a}{2}}M^{-\frac{a}{2}}\right\|_{\LOne}+1\right){\cb ^2.}	
    \label{estMm_1}	
\end{align}	
{\cb  Here we used the properties of the cutoff function $\varphi_{\delta}$. Since for $a\geq2\alpha>1$ it holds that 
$\delta^{-a}|\Omega|+1\leq
%    \left(\delta^{-1}|\Omega|^{\frac{1}{a}}+1\right)^a \nonumber\\\leq&
   \Cl{C100}^a(\delta)$, 
estimate (\ref{estMm_1}) leads 
to a recursive inequality
\begin{align}
 &A_a\leq \Cl{C103}(\delta,T,R)a^{\Cl{C101}}A_{\frac{a}{2}}^2\label{estMm_13}
\end{align}
for 
\begin{align}
 A_a:=\frac{\underset{s\in[0,t]}{\max}\left(\left\|\varphi_{\delta}^{a} M^{-a}(t)\right\|_{\LOne}+1\right)}{\Cr{C100}^a(\delta)}+1.\nonumber
\end{align}
A standard induction argument
together with estimate \eqref{Mma} for $a:=2\alpha$ 
implies that
\begin{align} A_{2^{n+1}\alpha}^{\frac{1}{2^n}}
\leq&\left(\Cr{C103}(\delta,T,R)\alpha^{\Cr{C101}}\right)^{\sum^{n-1}_{k=0}2^{k-n}}2^{\Cr{C101}\sum^{n-1}_{k=0}(n+1-k)2^{k-n}}A_{2\alpha}\nonumber\\
 \underset{n\rightarrow\infty}{\rightarrow}&\Cr{C103}(\delta,T,R)\alpha^{\Cr{C101}}2^{3\Cr{C101}}A_{2\alpha}\nonumber\\
  \leq & \Cl{Binf_1}(\delta,T,R).\label{estMn_4}
\end{align}
Thanks to \eqref{estMn_4} we obtain that
\begin{align}	
    \underset{s\in[0,t]}{\max}\left\|M^{-1}(s)\right\|_{L^{\infty}(\left\{M_0\geq\delta\right\})}	
    \leq &\underset{s\in[0,t]}{\max}\left\|\varphi_{\delta} M^{-1}(s)\right\|_{\LInf}\nonumber\\
    = &\underset{s\in[0,t]}{\max}\underset{n\rightarrow\infty}{\lim}\,\left\|\varphi_{\delta} M^{-1}(t)\right\|_{L^{2^{n+1}\alpha}(\Omega)}\nonumber\\
    \leq &\underset{n\rightarrow\infty}{\lim\sup}\,\Cr{C100}(\delta)A_{2^{n+1}\alpha}^{\frac{1}{2^{n+1}\alpha}}\nonumber\\	
    \leq&\Cr{C100}(\delta)\Cr{Binf_1}^{\frac{1}{2\alpha}}(\delta,T,R)\nonumber\\
    =: &\Cr{Binf}(\delta,T,R).\nonumber
\end{align}	
Consequently, it holds that} 	
\begin{align}	
    \inf\left\{M(s,x)|\ t\in[0,T],\,x\in\left\{M_0>\delta\right\}\right\}\geq \left(\Cl{Binf}\left(\delta/2,T,R\right)\right)^{-1}=:\Cr{C26}(\delta,T,R)>0 \nonumber	
\end{align}
for all $\delta\in \left(0,\|M_0\|_{\LInf}\right)$ and $T>0$, which proves \eqref{MPos1}. 

{\cb The above calculations -and thus also the quantitative estimate \eqref{MPos1}- are only valid if $M(t,\cdot)$ is uniformly bounded from below on $\left\{M_0>\delta\right\}$ for all $t\in[0,T]$. In order to justify the latter we apply a standard approximation argument. In \cite{ESe, EZh} (see also \cite[Section 3.2]{DisserAZ}) we proved that on the one hand a weak solution is unique, and on the other hand it can be obtained as a limit of approximations which solve a nondegenerate system such as, e.g., for  $n\in\N$ the system
\begin{alignat}{3}
&\partial_t{M_n}&=&\nabla \cdot \left(\left(M_n+\frac{1}{n}\right)^{\alpha}\nabla M_n\right)-\nabla \cdot \left(\left(M_n+\frac{1}{n}\right)^\gamma\nabla\rho_n\right)-f\left(M_n,\rho_n\right)& \text{ in } (0,T)\times\Omega,\nonumber\\%\label{NONAUT_EqIe}\\	
&\partial_t{\rho_n}&=&\Delta\rho_n-g(M_n,\rho_n)& \text{ in }(0,T)\times\Omega\nonumber%\label{NONAUT_EqIIe}
\end{alignat}
equipped with the same initial and boundary conditions as for the original system \eqref{PMCT}. Since  $M_n\geq0$ solves a nondegenerate equation, its strict positivity is guaranteed. Moreover, it is not difficult to see that family $M_n$, $n\in\N$,  satisfies a positive bound similar to \eqref{MPos1} with some constant  which is independent of $n$. Consequently, the limit function $M$ is indeed uniformly bounded from below by a positive constant and satisfies \eqref{MPos1}.
}
% Together with a standard approximation argument, e.g., by adding {\cb a} regularising term $\varepsilon \Delta M$, $\varepsilon>0$, on the right-hand side of equation \eqref{EqI} in order to render it nondegenerate and then letting $\varepsilon\rightarrow0$, this proves 2.

{\bf 3.} Let $M:=u^{\frac{1}{\alpha+1}}$. Under this change of variables equation \eqref{EqI} takes the form
\begin{align}
 \frac{1}{\alpha+1}u^{\frac{1}{\alpha+1}-1}\partial_t u=\frac{1}{\alpha+1}\Delta u-\frac{\gamma}{\alpha+1}u^{\frac{\gamma}{\alpha+1}-1}\nabla\rho\cdot\nabla u-u^{\frac{\gamma}{\alpha+1}}\Delta\rho-f\left(u^{\frac{1}{\alpha+1}},\rho\right).\label{Equ1}
\end{align}
Dividing \eqref{Equ1} by $\frac{1}{\alpha+1}u^{\frac{1}{\alpha+1}-1}$, we obtain that
\begin{align}
 \partial_t u=u^{\frac{\alpha}{\alpha+1}}\Delta u-\gamma u^{\frac{\gamma-1}{\alpha+1}}\nabla\rho\cdot\nabla u-(\alpha+1)u^{\frac{\alpha+\gamma}{\alpha+1}}\Delta\rho-(\alpha+1)u^{\frac{\alpha}{\alpha+1}}f\left(u^{\frac{1}{\alpha+1}},\rho\right),\label{Equ2}
\end{align}
i.e., 
\begin{align}
 \partial_t u=a_0\Delta u+a_1\cdot\nabla u+a_3,\label{Equ3}
\end{align}
with 
\begin{align*}
 &a_0:=u^{\frac{\alpha}{\alpha+1}},\\
 &a_1:=-\gamma u^{\frac{\gamma-1}{\alpha+1}}\nabla\rho,\\
 &a_2:=-(\alpha+1)u^{\frac{\alpha+\gamma}{\alpha+1}}\Delta\rho-(\alpha+1)u^{\frac{\alpha}{\alpha+1}}f\left(u^{\frac{1}{\alpha+1}},\rho\right).
\end{align*}
Due to the results of parts 1. and 2. of this Lemma and assumptions on $\alpha$, $\gamma$, and $f$, we have for all $0<\tau<T$ and $\delta\in(0,\|M_0\|_{\LInf})$ that in $[\tau,T]\times \{M_0>\delta\}=[\tau,T]\times \{u_0>\delta^{\alpha+1}\}$ equation \eqref{Equ3} is  a nondegenerate linear parabolic PDE with H\"older continuous coefficients. Standard result \cite[Chapter 10, Theorem 10.1]{LSU} on {\it interior} regularity in H\"older spaces yields that
\begin{align}
 \|u\|_{C^{1,2}\left([\tau,T]\times\overline{\{u_0>\delta^{\alpha+1}\}}\right)}\leq \C\left(\delta,\tau,T,R\right).\label{uC12}
\end{align}
Since the map $u\mapsto u^{\frac{1}{\alpha+1}}$ is smooth in $\R^+$, \eqref{MC12} is a consequence of \eqref{uC12}. {\it Lemma \ref{LemmaHoel}} is proved.
\end{proof}
\subsection*{Stability}
As was mentioned earlier, the initial boundary value problem \eqref{PMCT} is  well-posed (in the usual Hadamard sense). In particular, the following stability result was proved in {\cb\cite{ESe, EZh}}:
\begin{Lemma}[Lipschitz property, {\cb\cite{ESe,EZh}}]\label{Lipstab}
There exists a function $L_0:\R^+_0\times\R^+_0 \rightarrow\R^+_0$ with $L_0(0,\cdot)\equiv1$, which is continuous, increasing in each variable, depends only upon the parameters of the problem, 
and such that the following Lipschitz-type estimate holds:
\begin{align}	
&\underset{s\in[0,t]}{\max}\|(M_1-M_2,\rho_1-\rho_2)(s)\|_{H^{-1}(\Omega)\times\LTwo}\nonumber\\
&+\left(\int_0^t\left(M_{1}^{\alpha+1}-M_{2}^{\alpha+1}, M_1-M_2\right)_{\LTwo}\,ds\right)^{\frac{1}{2}}+\|{\cb \rho_1-\rho_2}\|_{L^2(0,t;\HOneO)}\nonumber\\
\leq & L_0(t,R)\|(M_{10}-M_{20},\rho_{10}-\rho_{20})\|_{H^{-1}(\Omega)\times\LTwo}.\label{Lip}
\end{align}
% \item There exists a function $L_1:\R^+\times\R^+\times\R^+_0 \rightarrow\R^+_0$ which is continuous and depends only upon the parameters of the problem such that for all $0<t_1<t$ it holds that
% \begin{align}
% &\int_{t_1}^t\left(M_{1}^{\alpha+1}-M_{2}^{\alpha+1}, M_1-M_2\right)+\|\nabla(v_1-v_2)\|_{\LTwo}^2\,ds\nonumber\\
% \leq& L_1(t_1,t,R)\int_0^t \|(M_1-M_2,\rho_1-\rho_2)(s)\|_{H^{-1}(\Omega)\times\LTwo}^2\,ds.\label{Innreg}
% \end{align}
% \end{enumerate}
\end{Lemma}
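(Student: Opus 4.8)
The plan is to derive an energy-type estimate for the difference of two solutions, but working in the negative-order space $\HmOne$ for the $M$-component, since this is the only norm in which the degenerate diffusion term behaves monotonically. First I would set $u:=M_1-M_2$, $v:=\rho_1-\rho_2$, and apply the inverse Laplacian: let $w:=(-\Delta)^{-1}u\in\HOneO$ solve $-\Delta w=u$ with $w=0$ on $\partial\Omega$, so that $\|u(s)\|_{\HmOne}^2=(u,w)_{\LTwo}=\|\nabla w\|_{\LTwon}^2$. Testing the difference of the two copies of \eqref{EqI} with $w$ gives $\tfrac12\tfrac{d}{dt}\|u\|_{\HmOne}^2 = -\big(M_1^\alpha\nabla M_1-M_2^\alpha\nabla M_2,\nabla w\big)_{\LTwon} + \big(M_1^\gamma\nabla\rho_1-M_2^\gamma\nabla\rho_2,\nabla w\big)_{\LTwon} - \big(f(M_1,\rho_1)-f(M_2,\rho_2),w\big)_{\LTwo}$. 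The crucial point is to rewrite $M_i^\alpha\nabla M_i=\tfrac{\alpha+1}{\alpha+2}\nabla M_i^{\alpha+1}\cdot\tfrac{\alpha+2}{\alpha+1}$ — more precisely $M_i^\alpha\nabla M_i=\tfrac{1}{\alpha+1}\nabla M_i^{\alpha+1}$ — and integrate by parts to turn the leading term into $\tfrac{1}{\alpha+1}(M_1^{\alpha+1}-M_2^{\alpha+1},u)_{\LTwo}$, which is nonnegative by monotonicity of $r\mapsto r^{\alpha+1}$ and produces exactly the second term on the left-hand side of \eqref{Lip}.

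Next I would handle the taxis and reaction terms as perturbations controlled by the quantities already under control plus the uniform $\X$-bound from {\it Lemma \ref{LemmaHoel}}. For the chemotactic term I would write $M_1^\gamma\nabla\rho_1-M_2^\gamma\nabla\rho_2=(M_1^\gamma-M_2^\gamma)\nabla\rho_1+M_2^\gamma\nabla v$ and estimate each piece: the difference $M_1^\gamma-M_2^\gamma$ is controlled using $\gamma<\alpha$ together with the elementary inequality $|r^\gamma-s^\gamma|\le C|r^{\alpha+1}-s^{\alpha+1}|^{\theta'}(\cdots)$ (or, more simply, $|r^\gamma - s^\gamma|^2 \le C(r^{\alpha+1}-s^{\alpha+1})(r-s)$ up to powers, using the balance condition), so that this contribution is absorbed by the good diffusion term and a multiple of $\|u\|_{\HmOne}^2$; the term $M_2^\gamma\nabla v$ pairs with $\nabla w$ and is bounded by $C\|v\|_{\HOneO}\|u\|_{\HmOne}$ thanks to the $\LInf$-bound on $M_2$. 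The reaction term is split via \eqref{tildefnew}: the linear part $\Cr{F_5}M$ gives $\Cr{F_5}(u,w)_{\LTwo}=\Cr{F_5}\|u\|_{\HmOne}^2$, while $\widetilde f(M_1^\beta,\rho_1)-\widetilde f(M_2^\beta,\rho_2)$ is Lipschitz in its arguments (as $\widetilde f\in W^{1,\infty}_{loc}$) and the factor $|M_1^\beta-M_2^\beta|$ is again dominated, using $\beta>1+\tfrac\alpha2$, by a small multiple of the diffusion term plus $C\|u\|_{\HmOne}$; pairing with $w\in\HOneO\hookrightarrow\LTwo$ closes this.

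Simultaneously I would derive the standard $\LTwo$-estimate for $v$ by testing the difference of \eqref{EqII} with $v$: using \eqref{g22} so that $g(M_1,\rho_1)-g(M_2,\rho_2)=\Cr{G_1}v+g_2(\rho_1)u+(g_2(\rho_1)-g_2(\rho_2))M_2$ with $|g_2|\le\Cr{G_2}$ and $g_2\in W^{1,\infty}_{loc}$, one gets $\tfrac12\tfrac{d}{dt}\|v\|_{\LTwo}^2+\|\nabla v\|_{\LTwon}^2\le C(\|v\|_{\LTwo}^2+\|u\|_{\HmOne}\|v\|_{\HOneO})$, where the cross term $\|g_2(\rho_1)u\|$ is measured against $v$ through duality $\HmOne$–$\HOneO$ after a Young split that hands half of $\|\nabla v\|_{\LTwon}^2$ back to the left. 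Adding the two differential inequalities, absorbing the $\|\nabla v\|$-terms, and applying Gronwall on $[0,t]$ produces \eqref{Lip} with $L_0(t,R)$ depending only on $t$, on the uniform $\X$-bound (hence on $R$), and on the problem parameters; the normalisation $L_0(0,\cdot)\equiv1$ is immediate since at $t=0$ only the initial-data term survives. The main obstacle is the bookkeeping around the degenerate diffusion term: one must verify that every occurrence of a power-difference $M_1^{\sigma}-M_2^{\sigma}$ with $\sigma\in\{\gamma,\beta\}$ (and $\gamma,\beta>1+\tfrac\alpha2$, $<\alpha+1$ in the relevant range) can be genuinely controlled by $(M_1^{\alpha+1}-M_2^{\alpha+1},M_1-M_2)_{\LTwo}$ plus a harmless lower-order term — this is precisely where the sharp balance condition \eqref{newbc} enters, and it is the step where a naive estimate fails and the interplay of exponents must be tracked carefully.
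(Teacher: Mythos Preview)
The paper does not actually prove this lemma; it is stated with a citation to \cite{ESe,EZh} and no proof is given. Your proposal is correct and follows precisely the standard $H^{-1}$--energy approach that one expects (and which the paper itself redeploys, with sharper bookkeeping, in the proof of the smoothing property in Section~\ref{secsmooth}; compare equations \eqref{differ}--\eqref{est2} there).

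One clarification is worth making. You write that controlling $|M_1^\sigma-M_2^\sigma|$ for $\sigma\in\{\gamma,\beta\}$ by the good diffusion term ``is precisely where the sharp balance condition \eqref{newbc} enters.'' This slightly overstates the requirement: for the Lipschitz estimate \eqref{Lip} the \emph{non-strict} balance condition $\gamma,\beta\geq 1+\tfrac{\alpha}{2}$ (condition \eqref{bc} / \eqref{tildef}) already suffices, since one only needs the power-difference to be absorbable by a small multiple of $\big(M_1^{\alpha+1}-M_2^{\alpha+1},M_1-M_2\big)_{\LTwo}$ plus a Gronwall-harmless multiple of $\|u\|_{\HmOne}^2$. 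The \emph{strict} inequalities $\gamma,\beta>1+\tfrac{\alpha}{2}$ are needed only later, in the smoothing property, where one must extract genuine dissipativity (a strictly negative coefficient in front of $\|u\|_{\HmOne}^2$) rather than merely close a Gronwall loop; see the discussion around \eqref{kappa} in the paper. Your argument is otherwise accurate.
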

% \begin{proof}
% The Lipschitz property \eqref{Lip} was already proved in \cite{ESe,ESZ}.
%  The second statement follows in a standard way from the following differential inequality which was also established in \cite{ESe,ESZ}:
%  \begin{align}
%   &\frac{d}{dt}\|(M_1-M_2,\rho_1-\rho_2)(s)\|_{H^{-1}(\Omega)\times\LTwo}^2+\left(M_{1}^{\alpha+1}-M_{2}^{\alpha+1}, M_1-M_2\right)+\|\nabla(v_1-v_2)\|_{\LTwo}^2\nonumber\\
%   \leq& \C(t,R)\|(M_1-M_2,\rho_1-\rho_2)(s)\|_{H^{-1}(\Omega)\times\LTwo}^2.\label{diffin}
%  \end{align}
%  One simply multiplies \eqref{diffin} by $(t-t_1)$, integrates over $(t_1,t)$, and then divides by $(t-t_1)$.
% \end{proof}

It is possible to estimate the difference $M_1-M_2$ in $\LInf$ by replacing \eqref{Lip} with a H\"older property:
\begin{Lemma}[H\"older property]
 There exists a constant $\theta_{\infty}\in(0,1)$ and a {\cb function} $L_1:\R^+_0\times\R^+_0\rightarrow\R^+$  which is continuous, increasing in each variable, depends only upon the parameters of the problem, 
and such that for all $(M_{10},\rho_{10}),(M_{20},\rho_{20})\in C^{\theta}(\overline{\Omega})\times C^{2+\theta}(\overline{\Omega})$ with $\|(M_0,\rho_0)\|_{C^{\theta}(\overline{\Omega})\times C^{2+\theta}(\overline{\Omega})}\leq R$ for some $R\geq0$ it holds for all $t>0$ that
\begin{align}
&\underset{s\in[0,t]}{\max}\|(M_1-M_2,\rho_1-\rho_2)(s)\|_{\LInf}\leq L_1(t,R)\|(M_{10}-M_{20},\rho_{10}-\rho_{20})\|_{H^{-1}(\Omega)\times\LTwo}^{\theta_{\infty}}.\label{Holder}
\end{align}
\end{Lemma}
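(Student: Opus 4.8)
The plan is to interpolate the $\LInf$ norm of the difference of the two solutions between, on the one hand, uniform‑in‑time H\"older bounds for each solution separately and, on the other hand, the Lipschitz bound in the weak norm $\HmOne\times\LTwo$ furnished by {\it Lemma \ref{Lipstab}}. First I would note that $C^{\theta}(\overline{\Omega})\times C^{2+\theta}(\overline{\Omega})$ is continuously embedded into $\X$, so the hypothesis $\|(M_{i0},\rho_{i0})\|_{C^{\theta}(\overline{\Omega})\times C^{2+\theta}(\overline{\Omega})}\leq R$ gives $\|(M_{i0},\rho_{i0})\|_{\X}\leq C_0R=:R'$ for $i=1,2$ with $C_0=C_0(\Omega)$; hence both weak solutions exist globally and {\it Lemma \ref{Lipstab}} applies, yielding, with $d:=\|(M_{10}-M_{20},\rho_{10}-\rho_{20})\|_{\HmOne\times\LTwo}$,
\begin{align*}
\max_{s\in[0,t]}\|(M_1-M_2,\rho_1-\rho_2)(s)\|_{\HmOne\times\LTwo}\leq L_0(t,R')\,d.
\end{align*}
Since $\Omega$ is bounded, $\|\cdot\|_{\HmOne}\leq C\|\cdot\|_{\LTwo}\leq C\|\cdot\|_{\LInf}$, so also $d\leq C_1R'$.

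Next I would upgrade {\it Lemma \ref{LemmaHoel}}(1) to a bound uniform up to the initial time. Since the initial data now lie in $C^{\theta}(\overline{\Omega})\times C^{2+\theta}(\overline{\Omega})$ and are compatible with the boundary conditions ($M_{i0}=0$, $\rho_{i0}=1$ on $\partial\Omega$), the same arguments as in the proof of {\it Lemma \ref{LemmaHoel}}(1) --- Theorems 2.I and 3.I of \cite{Ivanov86} for \eqref{EqI} and the Schauder estimates for \eqref{EqII} --- carry over to $[0,t]\times\overline{\Omega}$ instead of only $[\tau,t]\times\overline{\Omega}$ (at $s=0$ the bound is the hypothesis itself), so that
\begin{align*}
\sup_{s\in[0,t]}\Big(\|M_i(s)\|_{C^{\theta}(\overline{\Omega})}+\|\rho_i(s)\|_{C^{2+\theta}(\overline{\Omega})}\Big)\leq C_2(t,R),\qquad i=1,2,
\end{align*}
with $C_2$ continuous and nondecreasing in each argument; after possibly shrinking $\theta$ we may assume it coincides with the exponent appearing in \eqref{SobInter}--\eqref{SobInter2}. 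I expect this step --- closing the H\"older estimate all the way down to $t=0$, where the bound of {\it Lemma \ref{LemmaHoel}}(1) as stated degenerates as $\tau\to0^+$ --- to be the only genuinely delicate point, and it is precisely here that the extra regularity assumed on the initial data is exploited.

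Finally, for fixed $s\in[0,t]$ I would apply \eqref{SobInter} to $M_1(s)-M_2(s)$ and \eqref{SobInter2} (which dominates the $\LInf$ norm) to $\rho_1(s)-\rho_2(s)$, bound the H\"older factors by $2C_2(t,R)$ and the remaining factors $\|M_1(s)-M_2(s)\|_{\HmOne}$ and $\|\rho_1(s)-\rho_2(s)\|_{\LTwo}$ by $L_0(t,R')\,d$, and then take the maximum over $s\in[0,t]$, which gives
\begin{align*}
\max_{s\in[0,t]}\|(M_1-M_2,\rho_1-\rho_2)(s)\|_{\LInf}\leq C_3(t,R)\big(d^{\theta_1}+d^{\theta_2}\big).
\end{align*}
Setting $\theta_{\infty}:=\min\{\theta_1,\theta_2\}\in(0,1)$ and using $d\leq C_1R'\leq C_4R$ to write $d^{\theta_j}\leq (C_4R)^{\theta_j-\theta_{\infty}}\,d^{\theta_{\infty}}$, one absorbs the residual $R$‑dependence into a function $L_1(t,R)$ which is positive, continuous, increasing in each variable and depends only on the parameters of the problem (through $C_0,\dots,C_4$, $L_0$ and the interpolation constant). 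This is exactly \eqref{Holder}, and everything apart from the up‑to‑$t=0$ H\"older bound is routine bookkeeping with the two interpolation inequalities.
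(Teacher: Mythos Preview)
Your proposal is correct and follows essentially the same route as the paper: both combine the Lipschitz stability estimate \eqref{Lip} with the interpolation inequalities \eqref{SobInter}--\eqref{SobInter2} applied to the differences $(M_1-M_2)(s)$ and $(\rho_1-\rho_2)(s)$, and both set $\theta_{\infty}:=\min\{\theta_1,\theta_2\}$. You are more explicit than the paper about securing the H\"older bound on $[0,t]$ (rather than only on $[\tau,t]$); the paper's short proof simply writes the H\"older factor as $(2R)^{1-\theta_i}$, tacitly relying on the same up-to-$t=0$ regularity (compare \eqref{HoelB} in {\it Lemma~\ref{absorb}}, which is where the lemma is actually applied).
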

\begin{proof}
Combining \eqref{Lip} with the interpolation inequalities  \eqref{SobInter}-\eqref{SobInter2} applied to $w=(M_1-M_2)(s)$ and $v=(\rho_1-\rho_2)(s)$, 
 we deduce that {\cb
\begin{align}
&\underset{s\in[0,t]}{\max}\|(M_1-M_2,\rho_1-\rho_2)(s)\|_{\LInf\times W^{1,\infty}(\Omega)}\nonumber\\
\leq & \Cl{BH} L_0^{\theta_1}(t,R)(2R)^{1-\theta_1}\|M_{10}-M_{20}\|_{H^{-1}(\Omega)}^{\theta_1}+\Cr{BH} L_0^{\theta_2}(t,R)(2R)^{1-\theta_2}\|\rho_{10}-\rho_{20}\|_{\LTwo}^{\theta_2}\nonumber\\\leq &L_1(t,R)\|(M_{10}-M_{20},\rho_{10}-\rho_{20})\|_{H^{-1}(\Omega)\times\LTwo}^{\theta_{\infty}},
\nonumber
\end{align} 
where  $\theta_{\infty}:=\min\{\theta_1,\theta_2\}$.}
\end{proof}

\subsection*{Absorbing set} 
 It was proved in \cite{ESZ} that the initial boundary value problem \eqref{PMCT} generates a well defined semigroup $S(t)$, $t\geq0$, in the phase space $\X$ which possesses a bounded  exponentially absorbing  positively invariant set ${\cal B}_0$. In particular, the following dissipative estimate holds \cite{ESZ}: 
\begin{align}
\|(M,\rho)(t)\|_{\X}
\leq & C_{\infty}\|(M_0,\rho_0)\|_{\X}^{r_{\infty}} e^{-\omega_{\infty}t}+D_{\infty}\text{ for all } t\geq 0,\label{TheoDE}
\end{align}
where the positive constants $C_{\infty}, r_{\infty}, \omega_{\infty}, D_{\infty}$ depend only upon the parameters of the problem. Due to {\it Lemma \ref{LemmaHoel}}, the absorbing  set can actually be chosen in a nicer space:
\begin{Lemma}[Absorbing set] \label{absorb}
 The semigroup $S(t)$ possesses an exponentially absorbing positively invariant set ${\cal B}\subset C^{\theta}(\overline{\Omega})\times C^{2+\theta}(\overline{\Omega})$ ($\theta\in(0,1)$ as in {\it Lemma \ref{LemmaHoel}}) such that
 for some $R>0$ which depends only upon the parameters of the problem it holds for all $(M_0,\rho_0)\in {\cal B}$ that 
 \begin{align}
  &\|(M_0,\rho_0)\|_{C^{\theta}(\overline{\Omega})\times C^{2+\theta}(\overline{\Omega})}\leq R.\label{smooth}
 \end{align}
 Moreover, the solutions $(M,\rho):=S(\cdot)(M_0,\rho_0)$ belong to $\left(C^{\frac{\theta}{2},\theta}([0,\infty)\times \overline{\Omega})\cap  C^{1,2}\left(\R^+\times\{M_0>0\}\right)\right)\times C^{1+\frac{\theta}{2},2+\theta}([0,\infty)\times \overline{\Omega})$ and satisfy for all $\delta\in \left(0,\|M_0\|_{\LInf}\right)$ and $t>0$ {\cb the inequalities}
 \begin{align}
 &\|(M,\rho)\|_{C^{\frac{\theta}{2},\theta}([0,t]\times \overline{\Omega})\times C^{1+\frac{\theta}{2},2+\theta}([0,t]\times \overline{\Omega})}\leq \C(t),\label{HoelB}\\
 &\inf\left\{M({\cb s},x)|\ s\in[0,t],\,x\in\left\{M_0>\delta\right\}\right\}\geq \Cl{C26_}(\delta,t),\label{MPos1_}\\
 & \|M\|_{C^{1,2}\left([0,t]\times\overline{\{M_0>\delta\}}\right)}\leq \C(\delta,t).\label{MC12_}
\end{align}
\end{Lemma}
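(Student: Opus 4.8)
The plan is to take ${\cal B}$ to be the image of the known $\X$-absorbing set ${\cal B}_0$ of \cite{ESZ} under the time-one map, and then to read off every claim from \textit{Lemma \ref{LemmaHoel}}. Let $R_0>0$ be an $\X$-bound for ${\cal B}_0$; by the dissipative estimate \eqref{TheoDE} and the construction of ${\cal B}_0$ it may be chosen to depend only on the parameters of the problem. Put ${\cal B}:=S(1){\cal B}_0$. Using the semigroup property and the positive invariance of ${\cal B}_0$ one has $S(s){\cal B}=S(1)S(s){\cal B}_0\subseteq S(1){\cal B}_0={\cal B}$ for all $s\ge0$, so ${\cal B}$ is positively invariant, and ${\cal B}=S(1){\cal B}_0\subseteq{\cal B}_0$, so the $\X$-bound $R_0$ is inherited by ${\cal B}$. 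Exponential absorption also passes from ${\cal B}_0$ to ${\cal B}$: given bounded $B\subset\X$, \eqref{TheoDE} yields $t_B$, depending only logarithmically on $\|B\|_\X$, with $S(t)B\subseteq{\cal B}_0$ for $t\ge t_B$, hence $S(t)B\subseteq S(1){\cal B}_0={\cal B}$ and $\dist_\X(S(t)B,{\cal B})=0$ for $t\ge t_B+1$, while on $[0,t_B+1]$ the distance is bounded in terms of $\|B\|_\X$ and $t_B$ alone; absorbing these constants into the function $Q$ gives a bound $Q(\|B\|_\X)e^{-C_{rate}t}$.

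To promote ${\cal B}$ into the H\"older space and to obtain \eqref{HoelB}--\eqref{MC12_}, the point I would exploit is that a trajectory issued from ${\cal B}$ is a time-shift of a trajectory issued from ${\cal B}_0$: if $(M_0,\rho_0)=S(1)v_0\in{\cal B}$ with $v_0\in{\cal B}_0$ and $(\tilde M,\tilde\rho):=S(\cdot)v_0$, then $(M_0,\rho_0)=(\tilde M,\tilde\rho)(1)$ and, by uniqueness of solutions, $(M,\rho):=S(\cdot)(M_0,\rho_0)=(\tilde M,\tilde\rho)(\cdot+1)$ on $[0,\infty)$. In this way the instant $\tilde t=1$, interior to the time interval of $(\tilde M,\tilde\rho)$, plays the role of the initial time of $(M,\rho)$, so the singular initial time in \textit{Lemma \ref{LemmaHoel}}, parts 1 and 3, causes no trouble. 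Applying \textit{Lemma \ref{LemmaHoel}}, part 1, to $(\tilde M,\tilde\rho)$ with $R:=R_0$ on $[1,t+1]$ and translating time by one, one gets $(M,\rho)\in C^{\frac\theta2,\theta}([0,\infty)\times\overline\Omega)\times C^{1+\frac\theta2,2+\theta}([0,\infty)\times\overline\Omega)$ and the bound \eqref{HoelB} with a constant depending only on $t$ and the parameters; specialising to $t=0$ gives \eqref{smooth} and in particular ${\cal B}\subset C^\theta(\overline\Omega)\times C^{2+\theta}(\overline\Omega)$. Since ${\cal B}\subseteq{\cal B}_0$ forces $\|(M_0,\rho_0)\|_\X\le R_0$, \textit{Lemma \ref{LemmaHoel}}, part 2, applied directly to $(M,\rho)$ (no shift needed, that part being already stated on a closed interval) yields $\R^+\times\{M_0>0\}\subseteq\{M>0\}$ and the positivity bound \eqref{MPos1_}, while part 3 gives, qualitatively, $M\in C^{1,2}(\R^+\times\{M_0>0\})$.

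The main obstacle will be the quantitative bound \eqref{MC12_} on the \emph{closed} interval $[0,t]$: \textit{Lemma \ref{LemmaHoel}}, part 3, is interior in time and controls only $[\tau,t]$ with $\tau>0$, and---unlike for part 1---the shift trick does not immediately close the gap, since the superlevel set $\{M_0>\delta\}=\{\tilde M(1,\cdot)>\delta\}$ need not sit inside any superlevel set $\{\tilde M_0>\delta'\}$ of the original datum (the support of $\tilde M$ can only grow in time). I would therefore treat the short layer around $\tilde t=1$ by hand. Since $M_0=\tilde M(1,\cdot)$ belongs to $C^\theta(\overline\Omega)$ with norm at most $R$, the distance estimate \eqref{dist} keeps $\{M_0\ge\delta\}$ at a positive distance $c=c(\delta,R)>0$ from $\{M_0\le\delta/2\}$; combined with the space--time H\"older continuity of $\tilde M$ near $\tilde t=1$ provided by \textit{Lemma \ref{LemmaHoel}}, part 1, this produces $\eta=\eta(\delta,\text{parameters})>0$ and a space--time neighbourhood of $[1,1+\eta]\times\overline{\{M_0\ge\delta\}}$ on which $\tilde M\ge\delta/4$. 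There equation \eqref{EqI}---equivalently the form \eqref{Equ3} used in the proof of \textit{Lemma \ref{LemmaHoel}}---is uniformly parabolic with H\"older continuous coefficients, so interior parabolic (Schauder) estimates, now interior in time about $\tilde t=1$, bound $M$ in $C^{1,2}([1,1+\eta]\times\overline{\{M_0\ge\delta\}})$. Gluing this with the bound of \textit{Lemma \ref{LemmaHoel}}, part 3, on $[1+\eta/2,t+1]\times\overline{\{M_0>\delta/2\}}$ and translating time back by one gives \eqref{MC12_}. Throughout, all constants are functions of $R_0$, $\delta$, $t$ and the parameters of the problem alone, as needed for the proof of \textit{Theorem \ref{mainexpo}}.
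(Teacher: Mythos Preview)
Your approach coincides with the paper's: set ${\cal B}:=S(1){\cal B}_0$ and invoke \textit{Lemma \ref{LemmaHoel}} via the time shift $\tau:=1$, $T:=t+1$. The paper's own proof is just these two sentences and does not elaborate further.

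The point you single out concerning \eqref{MC12_} is a genuine subtlety that the paper glosses over. The shift trick applied to \textit{Lemma \ref{LemmaHoel}}, part 3, only controls $\tilde M$ on superlevel sets of the \emph{original} datum $\tilde M_0$, whereas \eqref{MC12_} is stated on superlevel sets of $M_0=\tilde M(1,\cdot)$; since the support of $\tilde M$ can expand, these are not comparable. Your patch---using the space--time H\"older continuity of $\tilde M$ (part 1 of \textit{Lemma \ref{LemmaHoel}} with $\tau<1$) together with the separation estimate \eqref{dist} to obtain a lower bound $\tilde M\ge\delta/4$ on a space--time neighbourhood of $\{1\}\times\overline{\{M_0\ge\delta\}}$, then running the interior Schauder argument from the proof of part 3 there and gluing with part 3 applied directly to $(M,\rho)$ on $[\eta/2,t]$---is correct and is precisely what is needed to make the paper's terse proof complete. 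In this respect your write-up is more careful than the paper itself.
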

\begin{proof}
 Set ${\cal B}:=S(1){\cal B}_0$.  Due to a standard argument, ${\cal B}$ remains an exponentially absorbing  positively invariant set for $S(t)$. On the other hand, {\it Lemma \ref{LemmaHoel}} ensures \eqref{smooth}-\eqref{MC12_} (choose $\tau:=1$ and $T:=t+1$).
\end{proof}

% \begin{Remark}
% 	In what follows  property \eqref{smooth} is mostly used by default, without referring to it explicitly. 
% \end{Remark}\noindent
 
\section{A smoothing property}\label{secsmooth}
The aim of this section is to prove that the semigroup $S(t)$ generated by system \eqref{PMCT} is asymptotically smooth. Recall that due to the general theory presented in \cite{bookMe} {\cb (}see also references therein) every asymptotically smooth semigroup possesses an exponential attractor. More notation is needed first. 
% 
% % In order to shorten the notation, we will sometimes use
% % Following the idea from 
% Also, 
Let 
\begin{align}
&u:=(M,\rho),\nonumber\\
&X:=H^{-1}(\Omega)\times\LTwo.\label{Xnorm}
\end{align}
Following \cite{EfZelik}, we introduce for any $u_0:=(M_0,\rho_0)\in{\cal B}$ some suitable $u_0$-dependent spaces and an operator. In our case a possible choice is as follows: for $\delta\in(0,\|M_0\|_{L^{\infty}(\Omega)})$ and $0<t_1<t$ set
\begin{align}
&Y_{u_0}^{(\delta)}:=L^2((t_1,t)\times\{M_0>\delta\})\times\left( L^2((t_1,t),H^1(\{M_0>\delta\}))\cap L^2((t_1,t)\times\Omega)\right),\nonumber\\
&Z_{u_0}^{(\delta)}:=W^{(1,2),2}((t_1,t)\times\{M_0>\delta\})\times\left({ W^{(1,2),2}}((t_1,t)\times\{M_0>\delta\})\cap H^1([0,t],\HOneO,\HmOne)\right),\nonumber\\
&K_{u_0}^{(\delta)}:{\cal B}\rightarrow Z_{u_0}^{(\delta)},\, K_{u_0}^{(\delta)}(u_{10})(s):=(S(s)u_{10})|_{\left\{M_0>\delta\right\}} \text{ for all }s\in[0,t],\,u_{10}\in {\cal B}.\nonumber
\end{align}
Observe that since $M_0$ is a continuous function, the level sets $\left\{M_0>\delta\right\}$ are open. Hence, the spaces $Y_{u_0}^{(\delta)}$ and  $Z_{u_0}^{(\delta)}$ are well defined.  

Now we can formulate a smoothing property for our case:
\begin{Theorem}[Smoothing property]\label{theoSP} Let $\Omega$ be a smooth bounded domain in $\R^N$, $N\in\{1,2,3\}$. 
Let the functions $f$ and $g$ satisfy  assumptions \eqref{fM}-\eqref{g2} and \eqref{tildefnew} and let the given constants $\alpha$ and $\gamma$ satisfy
$\frac{\alpha}{2}+1<\gamma<\alpha$. Then there exist some constants $\Cr{A2},\Cr{A3},\delta,\varepsilon,T>0$ depending only on the parameters of the problem and such that the following smoothing property holds for the operator $S(T)$:
\begin{subequations}\label{SP}
\begin{align}
&\|(S(T))(u_{10})-(S(T))(u_{20})\|_{X}\leq \frac{1}{2}\|u_{10}-u_{20}\|_{X}+\Cl[A]{A2}\left\|K_{u_0}^{\left(\frac{\delta}{2}\right)}(u_{10})-K_{u_0}^{\left(\frac{\delta}{2}\right)}(u_{20})\right\|_{Y_{u_0}^{(\delta)}},\label{SP1}\\
&\left\|K_{u_0}^{\left(\frac{\delta}{2}\right)}(u_{10})-K_{u_0}^{\left(\frac{\delta}{2}\right)}(u_{20})\right\|_{Z_{u_0}^{\left(\frac{\delta}{2}\right)}}\leq \Cl[A]{A3}\|u_{10}-u_{20}\|_{X}\label{SP2}\\
&\text{ for }\|u_0-u_{10}\|_{X},\,\|u_0-u_{20}\|_{X}\leq\varepsilon,\ u_0,u_{10},u_{20}\in{\cal B},
\end{align}
\end{subequations}
where ${\cal B}$ is the absorbing set from {\it Lemma \ref{absorb}}.
\end{Theorem}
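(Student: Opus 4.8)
The plan is to establish the two inequalities \eqref{SP1} and \eqref{SP2} separately, both by testing the equations satisfied by the differences of two solutions with carefully chosen multipliers and exploiting the smoothing/localisation machinery of \emph{Lemma \ref{LemmaHoel}} and \emph{Lemma \ref{absorb}}. Write $u_i=(M_i,\rho_i)=S(\cdot)u_{i0}$ for $i=1,2$, and set $m:=M_1-M_2$, $r:=\rho_1-\rho_2$. The starting point for \eqref{SP2} is the observation that on the sublevel set $\{M_0>\delta/2\}$ both $M_1$ and $M_2$ are bounded away from zero (by \eqref{MPos1_}) and are of class $C^{1,2}$ there (by \eqref{MC12_}), so on this set the $M$-equation is a genuinely nondegenerate, uniformly parabolic linear equation for $m$ with H\"older coefficients controlled by the parameters of the problem. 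A standard energy estimate — multiplying the localised $m$-equation by $\varphi_\delta^2 m$ with the cut-off $\varphi_\delta$ from \eqref{cutoff} (with $\delta_0:=\delta/2$, $\delta_1:=\delta$), integrating in space and time over $(t_1,t)\times\Omega$, and using \eqref{nablaphi} to absorb the terms produced by derivatives of $\varphi_\delta$ — yields $\|m\|_{L^2((t_1,t),H^1(\{M_0>\delta\}))}$ bounded by a constant times the $X$-norm of the initial difference, with the help of the $H^{-1}$-Lipschitz estimate \eqref{Lip} for the low-order terms and the $L^2$-control of $r$ therein. A second differentiation-in-time (or rather: testing with $\partial_t$ of the localised quantity, as is standard for $W^{(1,2),2}$ estimates) upgrades this to control of $\partial_t m$ and of the second spatial derivatives on $\{M_0>\delta/2\}$; the $\rho$-component of $Z_{u_0}^{(\delta/2)}$ is handled by parabolic regularity for the linear equation satisfied by $r$ using \eqref{g22}. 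This gives \eqref{SP2}.

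For \eqref{SP1} the idea, following \cite{EfZelik,bookMe}, is to decompose the difference $S(T)u_{10}-S(T)u_{20}$ into a part that decays (giving the $\tfrac12$-contraction) and a part that lives essentially on $\{M_0>\delta\}$ and is therefore absorbed by the $Y_{u_0}^{(\delta)}$-norm on the right. Concretely, one tests the $m$-equation with $(-\Delta)^{-1}m$ (i.e., works in $H^{-1}$, since by the remark in the introduction this is the only negative Sobolev norm in which the solution operators are Lipschitz) and the $r$-equation with $r$ in $L^2$. On the \emph{super}level region $\{M_0>\delta\}$ one simply estimates all resulting terms crudely by the $Y_{u_0}^{(\delta)}$-norm of the localised difference (this produces the $\Cr{A2}\|K^{(\delta/2)}_{u_0}(u_{10})-K^{(\delta/2)}_{u_0}(u_{20})\|_{Y^{(\delta)}_{u_0}}$ term). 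On the complementary region $\{M_0\le\delta\}$, where $M_1,M_2$ are uniformly \emph{small} by H\"older continuity \eqref{HoelB} combined with the distance estimate \eqref{dist}, the degenerate diffusion term $M_i^\alpha\nabla M_i$ and the taxis term $M_i^\gamma\nabla\rho_i$ are both of size $O(\delta^{\text{positive power}})$; this is precisely where the \emph{strict} balance condition $1+\tfrac{\alpha}{2}<\gamma<\alpha$ and the refined structural hypothesis \eqref{tildefnew} (with $\beta>1+\tfrac\alpha2$) enter — they guarantee that the "bad" nonlinear contributions on $\{M_0\le\delta\}$ come with a favourable power of $M_i$, hence of $\delta$, so that choosing $\delta$ small and then $T$ large makes the corresponding coefficient in front of $\|u_{10}-u_{20}\|_X$ strictly less than $\tfrac12$. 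The dissipative estimate \eqref{TheoDE} together with positivity of $\Cr{F_5}$ in \eqref{tildefnew} supplies the genuine exponential decay of the linear part that produces the contraction factor.

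The main obstacle, as the authors themselves flag in the introduction, is carrying out the localisation in the \emph{negative} Sobolev space $H^{-1}$ rather than in $L^1$: multiplying by $(-\Delta)^{-1}m$ does not commute with multiplication by the cut-off $\varphi_\delta$, so one cannot simply split the integral identity into a piece over $\{M_0>\delta\}$ and a piece over $\{M_0\le\delta\}$. One must instead insert commutators $[\varphi_\delta,(-\Delta)^{-1}]$ and control them, using that $\varphi_\delta$ and its derivatives are controlled on the thin transition layer between the two level sets (where, by \eqref{dist}, the layer has a quantitatively controlled width). This is the delicate technical heart of the argument; once these commutator terms are shown to be dominated either by the $Y_{u_0}^{(\delta)}$-term or by a small multiple of $\|u_{10}-u_{20}\|_X$, the rest is bookkeeping: collect the estimates, fix $\delta$ small enough to make the $\{M_0\le\delta\}$-contribution beat $\tfrac12$, fix $\varepsilon$ so that the neighbourhood $\|u_0-u_{i0}\|_X\le\varepsilon$ keeps all solutions in the absorbing set $\mathcal B$ where \eqref{HoelB}–\eqref{MC12_} hold with uniform constants, and finally fix $T$ large using \eqref{TheoDE}. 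The constants $\Cr{A2},\Cr{A3},\delta,\varepsilon,T$ produced this way depend only on the parameters of the problem, as claimed.
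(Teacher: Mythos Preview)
Your overall architecture is right, but the ``delicate technical heart'' you identify is a phantom, and in chasing it you miss the actual key trick.

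For \eqref{SP1} the paper does \emph{not} localise the test function and hence never faces a commutator $[\varphi_\delta,(-\Delta)^{-1}]$. One tests the full $m$-equation against $(-\Delta)^{-1}m$ globally on $\Omega$; the diffusion term produces a \emph{global} dissipation $-\bigl(M_1^{\alpha+1}-M_2^{\alpha+1},\,M_1-M_2\bigr)_{\LTwo}\le -C\bigl\|M_1^{1+\alpha/2}-M_2^{1+\alpha/2}\bigr\|_{\LTwo}^2$, and the term $-\Cr{F_5}(M_1-M_2)$ directly yields $-\Cr{F_5}\|m\|_{\HmOne}^2$ (the contraction comes from here, not from the dissipative estimate \eqref{TheoDE}). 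Only the \emph{integrands} of the remaining nonlinear terms (taxis and $\widetilde f$) are split over $\{M_0\le\delta\}$ and $\{M_0>\delta\}$. On $\{M_0\le\delta\}$ the elementary inequality
\[
\bigl|M_1^{c}-M_2^{c}\bigr|\le \tfrac{c}{b}\,\max\{M_1,M_2\}^{\frac{c-b}{b}}\bigl|M_1^{b}-M_2^{b}\bigr|,\qquad b=1+\tfrac{\alpha}{2},\ c\in\{\gamma,\beta\},
\]
turns these into $U_{\delta,\varepsilon,T}^{\kappa}\bigl\|M_1^{1+\alpha/2}-M_2^{1+\alpha/2}\bigr\|_{\LTwo}$ with $\kappa>0$ (this is exactly where $\gamma,\beta>1+\tfrac{\alpha}{2}$ is used), and that is absorbed by the global diffusion dissipation above. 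On $\{M_0>\delta\}$ the leftover is just $\|m\|_{L^2(\{M_0>\delta\})}$ and $\|\nabla r\|_{L^2(\{M_0>\delta\})}$, i.e.\ the $Y_{u_0}^{(\delta)}$-norm. The smallness of $U_{\delta,\varepsilon,T}$ is obtained from the \emph{H\"older stability} estimate \eqref{Holder}, giving $U_{\delta,\varepsilon,T}\le\delta+L(T,R)\varepsilon^{\theta_\infty}$; then Gronwall on the resulting differential inequality for $\|m\|_{\HmOne}^2+C\|r\|_{\LTwo}^2$ gives the $\tfrac12$-contraction after choosing $T$ large, $\delta,\varepsilon$ small. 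No commutator estimate is needed anywhere.

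For \eqref{SP2} your cutoff-energy approach could be made to work, but the paper takes a shorter route: on $\{M_0>\delta_0\}$ both $M_i$ are bounded below, so the difference system is a genuinely nondegenerate linear parabolic system with $L^\infty$ coefficients, and one simply invokes standard \emph{interior} $W^{(1,2),2}$ regularity from \cite{LSU}. The required seed $\|m\|_{L^2((0,T)\times\{M_0>\delta/4\})}$ comes from the Lipschitz estimate \eqref{Lip}, whose middle term $\bigl(\int_0^t(M_1^{\alpha+1}-M_2^{\alpha+1},M_1-M_2)\,ds\bigr)^{1/2}$ controls $\|m\|_{L^2}$ on any set where $M_1,M_2$ are bounded below. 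The $H^1([0,T],\HOneO,\HmOne)$-part for $r$ follows directly from \eqref{diffro} and \eqref{Lip}.
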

\noindent { Due to the presence of the taxis term it seems impossible to handle the difference $M_1-M_2$ in $L^1$, as {\cb is} done for the porous medium equation in \cite[Chapter 4]{bookMe}. { This is the reason why} we use the $H^{-1}$-norm instead. It offers another convenient choice for a degenerate equation. We first recall some useful and well known facts about the gradient operator ($\nabla$), its adjoint ($\nabla^*$), and pseudo-inverse ($\nabla^+$):   
\begin{align}
&H^{1}_{0}(\Omega)\xtofrom[\nabla^{+}]{\nabla}(\LTwo)^N\xtofrom[\nabla^{+*}]{\nabla^{*}}\HmOne,\nonumber\\ &\nabla^{*}=-\nabla\cdot,\quad \nabla^+=\nabla^{-1}\Pi_{ \nabla\left(H^{1}_{0}(\Omega)\right)},\nonumber\\
&\nabla^+\nabla=id,\label{nab1}\\
&(-\Delta)^{-1}=\nabla^{+}\nabla^{+*}\label{Deltam}\\
&\nabla(-\Delta)^{-1}=\nabla^{+*},\label{nab12}\\
&\|\nabla^{+*}u^*\|_{(\LTwo)^N}=\|u^*\|_{\HmOne}.\label{nab2}
\end{align}
Here $\Pi_{\nabla\left(H^{1}_{0}(\Omega)\right)}$ denotes the  orthogonal projection on $\nabla\left(H^{1}_{0}(\Omega)\right)$ which is a closed subspace of  $(\LTwo)^N$.} 
{\it Proof of Theorem \ref{theoSP}.} Let us consider arbitrary points $(M_0,\rho_0),(M_{10},\rho_{10}),(M_{20},\rho_{20})\in {\cal B}$ such that
\begin{align}
	&\|(M_0-M_{10},\rho_0-\rho_{10})\|_{H^{-1}(\Omega)\times\LTwo},\ \|(M_0-M_{20},\rho_0-\rho_{20})\|_{H^{-1}(\Omega)\times\LTwo}\leq\varepsilon.\label{diste}
\end{align}
for some $\varepsilon>0$ which we will fix later on. Due to the interpolation inequality \eqref{SobInter} {\cb we obtain} that 
\begin{align}
 \|M_0-M_{10}\|_{\LInf},\ \|M_0-M_{20}\|_{\LInf}\leq \Cl{C21_}{\cb \varepsilon^{\theta_1}}.\label{diste1}
\end{align}
Let $(M,\rho),(M_{1},\rho_{1}),(M_{2},\rho_{2})$ be the corresponding solutions to system \eqref{PMCT}. 
Subtracting equation \eqref{EqI} for {\cb solutions $(M_{1},\rho_{1})$ and $(M_{2},\rho_{2})$} we obtain with \eqref{tildefnew} that
\begin{align}
	\partial_t(M_{1}-M_{2})=&\frac{1}{\alpha+1}\Delta \left(M_{1}^{\alpha+1}-M_{2}^{\alpha+1}\right)
	- \nabla\cdot\left((M_{1}^\gamma-M_{2}^\gamma) \nabla\rho_{2}\right)- \nabla\cdot\left(M_{1}^\gamma \nabla(\rho_{1}-\rho_{2})\right)\nonumber\\
	&-\Cr{F_5}(M_1-M_2)-\left(\widetilde{f}\left(M_1^{\beta},\rho_1\right)-\widetilde{f}\left(M_2^{\beta},\rho_2\right)\right).\label{differ}
\end{align}
Further, in order to shorten the notation we introduce the quantities
\begin{align}
&W:=M_{1}-M_{2},\quad {\cb W_0:=W(0)},\quad v:=\rho_{1}-\rho_{2},\quad {\cb v_0:=v(0)},\nonumber\\
&U_{\delta,\varepsilon,T}:={\max}\left\{M_{1}(s,x),M_{2}(s,x)|\, t\in[0,T],\, x\in \left\{M_0\leq\delta\right\}\right\}.\nonumber
\end{align}
% Let $0<\delta<\frac{1}{2}\|M_0\|_{\LInf}$ and let  $$ be a cut-off function based on $M_0$ with  properties \eqref{cutoff}.
 Our first goal is to progress {\cb towards} the 'contractive' part of estimate \eqref{SP1}. In order to achieve this we need to obtain some kind of dissipativity estimate for $\|M\|_{\HmOne}$  with  perturbation terms which do not contain  norms of $M$-component on level sets $\{M_0\leq\delta\}$. 
Multiplying \eqref{differ} by $(-\Delta)^{-1}W$ and  integrating  over $\Omega$, we arrive at
%and using {\cb integration by parts} and the chain rule where necessary, we obtain that 
\begin{align}	
\frac{1}{2}\frac{d}{dt}\left\|\nabla^{+*}W\right\|_{\LTwon}^2=&-\frac{1}{\alpha+1}\left(M_{1}^{\alpha+1}-M_{2}^{\alpha+1}, M_1-M_2\right)_{\LTwo}+\left((M_{1}^\gamma-M_{2}^\gamma) \nabla\rho_{2},\nabla^{+*}W\right)_{\LTwon}\nonumber\\
&+\left(M_{1}^\gamma \nabla v,\nabla^{+*}W\right)_{\LTwon}-\Cr{F_5}\left\|\nabla^{+*}W\right\|_{\LTwon}^2\nonumber\\
&-\left(\widetilde{f}\left(M_1^{\beta},\rho_1\right)_{\LTwo}-\widetilde{f}\left(M_2^{\beta},\rho_2\right),\nabla^+ \nabla^{+*}W\right)_{\LTwo}.\label{differ1}
\end{align}
Here we used the definition of the adjoint and properties   \eqref{Deltam}-\eqref{nab12}. 
Using the inequalities
\begin{align}	&\left(M_{1}^{\alpha+1}-M_{2}^{\alpha+1}\right)(M_{1}-M_{2})\geq {\cb  \frac{\alpha+1}{\left(1+\frac{\alpha}{2}\right)^2}} \left(M_{1}^{1+\frac{\alpha}{2}}-M_{2}^{1+\frac{\alpha}{2}}\right)^2,\label{low1}\\	&\frac{c}{b}\max\left\{M_{1},M_{2}\right\}^{\frac{c-b}{b}}\left|M_{1}^{b}-M_{2}^{b}\right|\geq  
	\left|M_{1}^{c}-M_{2}^{c}\right| \text{ for all } c\geq b>0,\label{low2}
\end{align}
and assumptions $\gamma,\beta>1+\frac{\alpha}{2}$, we can estimate the terms on the right-hand side of \eqref{differ1} in the following way:
\begin{align}
 &-\left(M_{1}^{\alpha+1}-M_{2}^{\alpha+1},M_{1}-M_{2}\right)_{\LTwo}\leq -{\cb \C}\left\|M_{1}^{1+\frac{\alpha}{2}}-M_{2}^{1+\frac{\alpha}{2}}\right\|_{\LTwo}^2,\label{est6}
\end{align}
\begin{align}
 &\left|\left((M_{1}^\gamma-M_{2}^\gamma) \nabla\rho_{2},\nabla^{+*}W\right)_{\LTwon}\right|\nonumber\\
 \leq&\C\left\|\nabla^{+*}W\right\|_{\LTwon}\left(\left\|M_{1}^\gamma-M_{2}^\gamma\right\|_{L^2(\{M_0\leq \delta\})}+\left\|M_{1}^\gamma-M_{2}^\gamma\right\|_{L^2(\{M_0> \delta\})}\right)\nonumber\\
 \leq&\C \left\|\nabla^{+*}W\right\|_{\LTwon}\left(U_{\delta,\varepsilon,T}^{\frac{2\gamma-2-\alpha}{\alpha+2}}\left\|M_{1}^{1+\frac{\alpha}{2}}-M_{2}^{1+\frac{\alpha}{2}}\right\|_{\LTwo}+\|W\|_{L^2(\{M_0> \delta\})}\right),
 \end{align}
 \begin{align}
 \left|\left(M_{1}^\gamma \nabla v,\nabla^{+*}W\right)_{\LTwon}\right|\leq&\C\left\|\nabla^{+*}W\right\|_{\LTwon}\left(\left\|M_{1}^\gamma \nabla v\right\|_{L^2(\{M_0\leq \delta\})}+\left\|M_{1}^\gamma \nabla v\right\|_{(L^2(\{M_0> \delta\}))^n}\right)\nonumber\\
 \leq&\C \left\|\nabla^{+*}W\right\|_{\LTwon}\left(U_{\delta,\varepsilon,T}^{\gamma}\left\|\nabla v\right\|_{\LTwon}+\left\|\nabla v\right\|_{(L^2(\{M_0> \delta\}))^n}\right),
\end{align}
\begin{align}
&\left|\left(\widetilde{f}\left(M_{1}^{\beta},\rho_1\right)-\widetilde{f}\left(M_{2}^{\beta},\rho_2\right),\nabla^+ \nabla^{+*}W\right)_{\LTwo}\right|\nonumber\\
\leq &\Cl{C11}\left\|\nabla^{+*}W\right\|_{\LTwon}\left(\left\|M_{1}^{\beta}-M_{2}^{\beta}\right\|_{\LTwo}+\|v\|_{\LTwo}\right)\nonumber\\
\leq &\Cr{C11}\left\|\nabla^{+*}W\right\|_{\LTwon}\left(\left\|M_{1}^\beta-M_{2}^\beta\right\|_{L^2(\{M_0\leq \delta\})}+\left\|M_{1}^\beta-M_{2}^\beta\right\|_{L^2(\{M_0> \delta\})}+\|v\|_{\LTwo}\right)\nonumber\\
\leq&\C \left\|\nabla^{+*}W\right\|_{\LTwon}\left(U_{\delta,\varepsilon,T}^{\frac{2\beta-2-\alpha}{\alpha+2}}\left\|M_{1}^{1+\frac{\alpha}{2}}-M_{2}^{1+\frac{\alpha}{2}}\right\|_{\LTwo}+\left\|W\right\|_{L^2(\{M_0> \delta\})}+\|v\|_{\LTwo}\right).\label{est4}
\end{align}
Combining \eqref{differ1} and  \eqref{est6}-\eqref{est4} with the Young inequality, we conclude that
\begin{align}
	&\frac{1}{2}\frac{d}{dt}\left\|\nabla^{+*}W\right\|_{\LTwon}^2\nonumber\\
	\leq &\left(-\frac{\Cr{F_5}}{2}+\C\left(U_{\delta,\varepsilon,T}^{2\frac{2\gamma-2-\alpha}{\alpha+2}}+U_{\delta,\varepsilon,T}^{2\frac{2\beta-2-\alpha}{\alpha+2}}\right)\right)\left\|\nabla^{+*}W\right\|_{\LTwon}^2+\Cl{B2}U_{\delta,\varepsilon,T}^{2\gamma}\left\|\nabla v\right\|_{\LTwon}^2\nonumber\\	&+\Cl{C12}\left(\left\|W\right\|_{L^2(\{M_0> \delta\})}^2+\left\|\nabla v\right\|_{(L^2(\{M_0> \delta\}))^n}^2+\|v\|_{\LTwo}^2\right)\nonumber\\
\leq &\left(-\frac{\Cr{F_5}}{2}+\Cl{B1}U_{\delta,\varepsilon,T}^{\kappa}\right)\left\|\nabla^{+*}W\right\|_{\LTwon}^2+\Cr{B2}U_{\delta,\varepsilon,T}^{\kappa}\left\|\nabla v\right\|_{\LTwon}^2\nonumber\\
&+\Cl{C16}\left(\left\|W\right\|_{L^2(\{M_0> \delta\})}^2+\left\|\nabla v\right\|_{(L^2(\{M_0> \delta\}))^n}^2+\|v\|_{\LTwon}^2\right),\label{est1}
\end{align}
where 
\begin{align}	\kappa:=2\min\left\{\frac{2\min\left\{\gamma,\beta\right\}-2-\alpha}{\alpha+2},\gamma\right\}=2\frac{2\min\left\{\gamma,\beta\right\}-2-\alpha}{\alpha+2}>0\label{kappa}
\end{align}
due to the assumptions on $\alpha$,  $\beta$, and $\gamma$.  { We emphasise at this point that our new and sharper assumptions $\gamma,\beta>1+\frac{\alpha}{2}$ allow not only to absorb the $L^2$ norms of the differences of some powers of $M_1$ and $M_2$  coming from the taxis and reaction terms, but also to obtain some dissipativity with respect to $\left\|\nabla^{+*}W\right\|_{\LTwon}^2$ in \eqref{est1}. Indeed, as we will see later, $U_{\delta,\varepsilon,T}$ can be made arbitrary small by choosing $\varepsilon$ and $\delta$ 
sufficiently small. Thanks to $\kappa>0$ this leads to a negative coefficient $\left(-\frac{\Cr{F_5}}{2}+\Cr{B1}U_{\delta,\varepsilon,T}^{\kappa}\right)$. With $\gamma$ or $\beta$ equal to $1+\frac{\alpha}{2}$ we would have $\kappa=0$  instead and thus a potentially positive coefficient $\left(-\frac{\Cr{F_5}}{2}+\Cr{B1}\right)$ which only guaranties an estimate such as \eqref{Lip}. 

Leaving  \eqref{est1}} for while we will now establish an bound for $\left\| v\right\|$. {  Once again,  we need to take care so as not to include any $L^p(\{M_0\leq\delta\})$-norms of  $W$  in our estimates.} Subtracting  equation \eqref{EqII} for $(M_{1},\rho_{1})$ and $(M_{2},\rho_{2})$  we obtain that
\begin{align}
	\partial_tv=\Delta v-\Cr{G_1}v-(g_2(\rho_{1})-g_2(\rho_{2}))M_{1}-g_2(\rho_{2})W.\label{diffro}
\end{align}
Multiplying \eqref{diffro} by $v$, integrating  over $\Omega$, and using \eqref{nab1}, we obtain that
\begin{align}
	\frac{1}{2}\frac{d}{dt}\left\| v\right\|_{\LTwo}^2
	=& -\left\|\nabla v\right\|_{\LTwon}^2-\Cr{G_1}\left\| v\right\|_{\LTwo}^2-\left(g_2(\rho_{1})-g_2(\rho_{2}),M_{1} v\right)_{\LTwo}-\left(W,g_2(\rho_2)v\right)_{\LTwo}\nonumber\\
	=& -\left\|\nabla v\right\|_{\LTwon}^2-\Cr{G_1}\left\| v\right\|_{\LTwo}^2-\left(g_2(\rho_{1})-g_2(\rho_{2}),M_{1} v\right)_{\LTwo}\nonumber\\
&-\left(\nabla^{+*}W,g_2(\rho_{2})\nabla  v + v \frac{d g_2}{d \rho}(\rho_{2})\nabla  \rho_{2}\right)_{\LTwon}.\label{diffro1}
\end{align}
Using the assumptions on  $g_2$ and the Young inequality 
we conclude from \eqref{diffro1} that
\begin{align}
\frac{1}{2}\frac{d}{dt}\left\| v\right\|_{\LTwo}^2
\leq& -\left\|\nabla v\right\|_{\LTwon}^2-\Cr{G_1}\left\| v\right\|_{\LTwo}^2+\C\left(M_{1},v^2\right)_{\LTwo}\nonumber\\
&+\left\|\nabla^{+*}W\right\|_{\LTwon}\left(\|\nabla v\|_{\LTwon} + \|v \|_{\LTwo}\right)\nonumber\\
\leq &\Cl{B6}\left\|\nabla^{+*}W\right\|_{\LTwon}^2-\frac{1}{2}\left\|\nabla  v\right\|_{\LTwon}^2+\C\|v\|_{\LTwo}^2.\label{est2}
\end{align}
Let us now multiply the inequality (\ref{est2}) by the constant $\Cl{B9}:=\frac{\Cr{F_5}}{4\Cr{B6}}$ and add it to (\ref{est1}). This yields
\begin{align}
&\frac{1}{2}\frac{d}{dt}\left(\left\|\nabla^{+*}W\right\|_{\LTwon}^2+\Cr{B9}\left\| v\right\|_{\LTwo}^2\right)\nonumber\\
\leq  &\left(-\frac{\Cr{F_5}}{4}+\Cr{B1}U_{\delta,\varepsilon,T}^{\kappa}\right)\left\|\nabla^{+*}W\right\|_{\LTwon}^2
+\left(-\Cl{C18}+\Cr{B2}U_{\delta,\varepsilon,T}^{\kappa}\right)\left\|\nabla v\right\|_{\LTwon}^2+\Cl{C17}\|v\|_{\LTwo}^2\nonumber\\
&+\Cr{C16}\left(\left\|W\right\|_{L^2(\{M_0> \delta\})}^2+\left\|\nabla v\right\|_{(L^2(\{M_0> \delta\}))^n}^2\right).\label{est1_2}
\end{align}
Observe that due to \eqref{Holder} and \eqref{diste} it follows for all  $\delta\in(0,\|M_0\|_{L^{\infty}(\Omega)})$ that 
\begin{align}
  U_{\delta,\varepsilon,T}\leq\delta+ L(T,R)\varepsilon^{\theta_{\infty}}.%\xrightarrow[\delta,\varepsilon\rightarrow0]{}0.
  \label{Ue}
\end{align}
Combining \eqref{est1_2}-\eqref{Ue} and recalling that $\kappa>0$ due to \eqref{kappa}, we conclude that 
if $\delta$ and ${\cb \varepsilon}$ are chosen in such a way that
\begin{align}
 \delta+ L(T,R)\varepsilon^{\theta_{\infty}}\leq \min\left\{\frac{\Cr{F_5}}{8\Cr{B1}},\frac{\Cr{C18}}{2\Cr{B2}}\right\}^{\frac{1}{\kappa}},\label{Uedt1}
\end{align}
then
% for each $T>0$ there exist $\delta=\delta(T)$ and $\varepsilon=\varepsilon(T)$ so small that for all $t\in[0,T]$
\begin{align}
\frac{1}{2}\frac{d}{dt}\left(\left\|\nabla^{+*}W\right\|_{\LTwon}^2+\Cr{B9}\left\| v\right\|_{\LTwo}^2\right)
\leq  &-\frac{\Cr{F_5}}{8}\left\|\nabla^{+*}W\right\|_{\LTwon}^2
-\frac{\Cr{C18}}{2}\left\|\nabla v\right\|_{\LTwon}^2+\Cr{C17}\|v\|_{\LTwo}^2\nonumber\\
&+\Cr{C16}\left(\left\|W\right\|_{L^2(\{M_0> \delta\})}^2+\left\|\nabla v\right\|_{(L^2(\{M_0> \delta\}))^n}^2\right)\nonumber\\
\leq  &-\Cl{C20} \left(\left\|\nabla^{+*}W\right\|_{\LTwon}^2+\Cr{B9}\left\| v\right\|_{\LTwo}^2\right)\nonumber\\
&+\Cl{C19}\left(\left\|W\right\|_{L^2(\{M_0> \delta\})}^2+\left\|\nabla v\right\|_{(L^2(\{M_0> \delta\}))^n}^2+\|v\|_{\LTwo}^2\right).\label{est1_3_}
\end{align}
% Since $v\in \HOneO$, we have due to  the Sobolev interpolation inequality that
% \begin{align}
%  \|v\|_{\LTwo}^2\leq \C \|\nabla v\|\|v\|_{\HmOne}.\label{interp}
% \end{align}
% Together with the Young and Poincar\'e inequalities, \eqref{nab2}, and \eqref{interp}, estimate \eqref{est1_2_}  leads to 
% \begin{align}
% \frac{1}{2}\frac{d}{dt}\left(\|W\|_{\HmOne}^2+\Cr{B9}\left\| v\right\|_{\LTwo}^2\right)
% \leq  &-\Cl{C20}\left(\|W\|_{\HmOne}^2
% +\Cr{B9}\left\|v\right\|_{\LTwo}^2\right)+\Cl{C19}\|v\|_{\HmOne}^2\nonumber\\
% &+\Cr{C16}\left(\left\|W\right\|_{L^2(\{M_0> \delta\})}^2+\left\|\nabla v\right\|_{(L^2(\{M_0> \delta\}))^n}^2\right).\label{est1_3_}
% \end{align}
Using the Gronwall lemma, the Lipschitz property \eqref{Lip}, and property \eqref{nab2} we obtain with \eqref{est1_3_} that for all $0<t_1<T$
\begin{align}
 &\|W(T)\|_{\HmOne}^2+\Cr{B9}\left\| v(T)\right\|_{\LTwo}^2 \nonumber\\
 \leq& e^{-2\Cr{C20}(T-t_1)}\left(\|W(t_1)\|_{\HmOne}^2+\Cr{B9}\left\|v(t_1)\right\|_{\LTwo}^2\right)\nonumber\\
 &+2\Cr{C19}\int_{t_1}^Te^{2\Cr{C20}(s-T)}\left(\left\|W\right\|_{L^2(\{M_0> \delta\})}^2+\left\|\nabla v\right\|_{(L^2(\{M_0> \delta\}))^n}^2+\|v\|_{\LTwo}^2\right)\,d s\nonumber\\
 \leq& \C L_0^2(t_1,R)e^{-2\Cr{C20}(T-t_1)}\left(\|W_0\|_{\HmOne}^2+\Cr{B9}\left\|v_0\right\|_{\LTwo}^2\right)\nonumber\\
 &+2\Cr{C19}\int_{t_1}^T\left\|W\right\|_{L^2(\{M_0> \delta\})}^2+\left\|\nabla v\right\|_{(L^2(\{M_0> \delta\}))^n}^2+\|v\|_{\LTwo}^2\,d s,\label{int1_2_}
\end{align}
which finally leads to the estimate
\begin{align}
 \|(W(T),v(T))\|_{X}\leq& \Cl{C27} L_0(t_1,R)e^{-\Cr{C20}(T-t_1)}\|(W_0,v_0)\|_{X}+\Cl{C24}\left\|(W,v)\right\|_{Y_{u_0}^{(\delta)}}\nonumber\\
 \leq&\frac{1}{2}\|(W_0,v_0)\|_{X}+\Cr{C24}\left\|(W,v)\right\|_{Y_{u_0}^{(\delta)}}\label{int1_3_}
 \end{align}
if $t_1$ and $T$ are such that
\begin{align}
 \Cr{C27} L_0(t_1,R)e^{-\Cr{C20}(T-t_1)}\leq\frac{1}{2}.\label{contr}
\end{align}

Next, we study the pair $(W,v)$ on the sets $\left\{M_0>\delta\right\}$. { On these sets the equation for $M$ is non-degenerate, which allows to use standard estimates for uniformly parabolic PDEs and thus obtain  better regularity.}
Starting once again with equations \eqref{differ} and \eqref{diffro}, we now rewrite them in the following way:
{\cb
\begin{subequations}\label{systemWv}
\begin{align}
 &\partial_t W={\cb M_1^{\alpha}}\Delta W+\Cl[b]{bb1}\cdot\nabla W+\Cl[b]{bb2} W+\Cl[b]{bb4}\Delta v+\Cl[b]{bb3}\cdot\nabla v+\Cl[b]{bb45}v,\label{diffMnond}\\
 &\partial_t v=\Delta v+\Cl[b]{b5}v+\Cl[b]{b6}W,\label{v}
\end{align}
\end{subequations}
where 
\begin{align*}
 \Cr{bb1}:=&\alpha M_1^{\alpha-1}\nabla (M_1+M_2)-\gamma M_2^{\gamma-1}\nabla\rho_1,\\
 \Cr{bb2}:=&\frac{M_1^{\alpha}-M_2^{\alpha}}{M_1-M_2}\Delta M_2+\alpha \frac{M_1^{\alpha-1}-M_2^{\alpha-1}}{M_1-M_2}|\nabla M_2|^2-\frac{M_1^{\gamma}-M_2^{\gamma}}{M_1-M_2}\Delta\rho_1-\gamma\frac{M_1^{\gamma-1}-M_2^{\gamma-1}}{M_1-M_2}\nabla M_1\cdot\nabla\rho_1,\\
 &-\frac{f(M_1,\rho_1)-f(M_2,\rho_1)}{M_1-M_2},\\
 \Cr{bb4}:=&-M_2^{\gamma},\\
 \Cr{bb3}:=&-\gamma M_2^{\gamma-1}\nabla M_2,\\
 \Cr{bb45}:=&-\frac{f(M_2,\rho_1)-f(M_2,\rho_2)}{\rho_1-\rho_2},\nonumber\\
 \Cr{b5}:=&-\frac{g(M_1,\rho_1)-g(M_1,\rho_2)}{\rho_1-\rho_2},\\
 \Cr{b6}:=&-\frac{g(M_1,\rho_2)-g(M_2,\rho_2)}{M_1-M_2}.
\end{align*}
 } Observe that due to \eqref{diste1}   we have for all $\delta_0\in\left(0,\|M_0\|_{\LInf}\right)$ that
\begin{align}
 {\inf}\left\{M_{10}(x),M_{20}(x)|\, x\in \left\{M_0>\delta_0\right\}\right\}
 \geq &\delta_0-\Cr{C21_}\varepsilon^{{\cb \theta_1}}\nonumber\\
 \geq &\frac{\delta_0}{2},\label{posdel}
\end{align}
so that, due to \eqref{MPos1_},
\begin{align}
 \inf\left\{M(t,x)|\ s\in[0,t],\,x\in\left\{M_0>\delta_0\right\}\right\}\geq \Cr{C26_}\left(\frac{\delta_0}{2},t\right),\label{MPos1_2}
\end{align}
if
\begin{align}
 \Cr{C21_}\varepsilon^{{\cb \theta_1}}\leq\frac{\delta_0}{2}.\label{epsdelta}
\end{align}
{\cb Thus, for such $\delta_0$ and  $\varepsilon$ 
% we have due to {\it Lemma \ref{absorb}} that 
 system \eqref{diffMnond} is a nondegenerate linear parabolic  system w.r.t. $(W,v)$. Moreover,  coefficients $b_i$  are compositions of continuous functions with  $M_k$ and $\rho_k$, $k=1,2$, and their partial derivatives up to the second order. {\it Lemma \ref{absorb}} implies that $b_i$'s all belong to $L^{\infty}((0,T)\times \{M_0>\delta_0\})$. Altogether, standard results on interior regularity}   in Sobolev spaces (see, e.g., Theorems 9.1 and 10.1, and the remark on local estimates in Sobolev spaces at the end of \textsection 10 in \cite[Chapter IV]{LSU}) together with estimates from {\it Lemma \ref{absorb}} imply that for all $0<t_0<t_1<T$ and  $\delta_0<\delta_1<\delta_2$ it holds 
\begin{align}
&\|W\|_{{ W^{(1,2),2}}((t_1,T)\times\{M_0>\delta_2\})}\leq \C(\delta_1,\delta_2,t_1,T)\left(\|W\|_{L^{2}((t_0,T)\times\{M_0>\delta_1\})}+\|v\|_{{ W^{(1,2),2}}((t_0,T)\times\{M_0>\delta_1\})}\right),\label{estW1}\\
 &\|v\|_{{ W^{(1,2),2}}((t_0,T)\times\{M_0>\delta_1\})}\leq \C(\delta,\delta_1,t_0,T)\left(\|v\|_{L^{2}((0,T)\times\{M_0>\delta_0\})}+\|W\|_{L^{2}((0,T)\times\{M_0>\delta_0\})}\right).\label{estv1}
\end{align}
Plugging  \eqref{estv1} into \eqref{estW1}, we obtain that
\begin{align}
 \|W\|_{{ W^{(1,2),2}}((t_1,T)\times\{M_0>\delta_2\})}
 \leq &\C(\delta,\delta_1,\delta_2,t_0,t_1,t)\|(W,v)\|_{L^{2}([0,t]\times\{M_0>\delta_0\})}.\label{estW2}
\end{align}
Thus, choosing $\delta_0:=\frac{\delta}{4}$, $\delta_1:=\frac{3\delta}{8}$, $\delta_2:=\frac{\delta}{2}$ in \eqref{estW2} and  $\delta_0:=\frac{\delta}{4}$, $\delta_1:=\frac{\delta}{2}$, $t_0:=t_1$ in \eqref{estv1} yields
\begin{align}
 \|(W,v)\|_{({ W^{(1,2),2}}((t_1,T)\times\{M_0>\frac{\delta}{2}\}))^2}
 \leq &\Cl{C104}(\delta,t_1,T)\|(W,v)\|_{(L^{2}((0,T)\times\{M_0>\frac{\delta}{4}\}))^2}.\label{estWv1}
\end{align}
Combining \eqref{estWv1} with  \eqref{Lip}, \eqref{MPos1_2}, and the inequalities \eqref{low1} and 
\begin{align}
 |M_1-M_2|\leq \frac{2}{\alpha+2}\inf\{M_1,M_2\}^{-\frac{\alpha}{2}}\left|M_{1}^{1+\frac{\alpha}{2}}-M_{2}^{1+\frac{\alpha}{2}}\right|,
\end{align}
we thus arrive at the estimate
\begin{align}
 &\|(W,v)\|_{({ W^{(1,2),2}}((t_1,T)\times\{M_0>\frac{\delta}{2}\}))^2}\nonumber\\
 \leq &\C(\delta,t_1,T)\left(\int_0^{\cb T}\left(M_{1}^{\alpha+1}-M_{2}^{\alpha+1}, M_1-M_2\right)\,ds\right)^{\frac{1}{2}}+{\cb\Cr{C104}(\delta,t_1,T)\|v\|_{L^2((0,T)\times\Omega)}}\nonumber\\
 \leq &\C(\delta,t_1,T)\|(W_0,v_0)\|_{H^{-1}(\Omega)\times\LTwo}.\label{estWv2}
\end{align}
Next, we recall that due to \eqref{Lip} it holds that
\begin{align}
 \|v\|_{L^2(0,T;\HOneO)}\leq L_0(T,R)\|(W_0,v_0)\|_{H^{-1}(\Omega)\times\LTwo}.\label{estnab}
\end{align}
Going back to \eqref{diffro}, we compute  that
\begin{align}
 &\|\partial_t v\|_{H^{-1}(\Omega)}\nonumber\\
 =&\|\Delta v-\Cr{G_1}v-(g_2(\rho_{1})-g_2(\rho_{2}))M_{1}-g_2(\rho_{2})W\|_{H^{-1}(\Omega)}\nonumber\\
 \leq&{\cb\|v\|_{\HOneO}+\Cr{G_1}\|v\|_{H^{-1}(\Omega)}+\C\|M_{1}\|_{\LInf}\|g_2(\rho_{1})-g_2(\rho_{2})\|_{\LTwo}+\C\|g_2(\rho_{2})\|_{W^{1,\infty}(\Omega)}\|W\|_{H^{-1}(\Omega)}}\nonumber\\
 \leq&\C{\cb(R)}\left(\|v\|_{\HOneO}+\|W\|_{H^{-1}(\Omega)}\right).\label{dtv}
\end{align}
Integrating \eqref{dtv} over $(0,t)$ and combining with \eqref{Lip} and \eqref{estnab}, we finally obtain that
\begin{align}
 \|v\|_{H^1((0,T),\HOneO,\HmOne)}\leq\C({\cb R},T)\|(W_0,v_0)\|_{H^{-1}(\Omega)\times\LTwo}.\label{vpar}
\end{align}
With \eqref{int1_3_}, \eqref{estWv2}, \eqref{vpar} we  have the conditions of the smoothing property \eqref{SP}, it only remains to choose the parameters in such a way that conditions \eqref{Uedt1}, \eqref{contr}, and \eqref{epsdelta} (recall that $\delta_0=\frac{\delta}{4}$) are satisfied, i.e., if
\begin{align}
 &\delta+ L(T,R)\varepsilon^{{\cb \theta_1}}\leq \min\left\{\frac{\Cr{F_5}}{8\Cr{B1}},\frac{\Cr{C18}}{2\Cr{B2}}\right\}^{\frac{1}{\kappa}},\label{cond1}\\
 &\Cr{C23} L_0(t_1,R)e^{-\Cr{C20}(T-t_1)}\leq\frac{1}{2},\label{cond2}\\
 &\Cr{C21_}\varepsilon^{{\cb \theta_1}}\leq\frac{\delta}{8}.\label{cond3}
\end{align}
Clearly, the exist such $t_1,T,\delta,$ and $\varepsilon$, that  conditions \eqref{cond1}-\eqref{cond3} are satisfied. Indeed, for any $t_1>0$ one can choose $T$ large enough so as to fulfil \eqref{cond2}. Then, choosing $\delta:=8\Cr{C21_}\varepsilon^{{\cb \theta_1}}$ in order to comply with \eqref{cond3}, it remains to choose $\varepsilon$ so small as to meet \eqref{cond1}.   
{\it Theorem~\ref{theoSP}} is  proved. 
\\\\\mbox{}\hfill\ensuremath{\Box}
\section{Proof of Theorem~\ref{mainexpo}}\label{mainproof}
We are finally ready for the\\\\
{\it Proof of Theorem~\ref{mainexpo}}. { Our proof goes through the following steps. First, we prove the existence of an exponential attractor ${\cal M}$ in  $H^{-1}(\Omega)\times\LTwo$-metric. This we achieve with the help of the smoothing property \eqref{SP}. Finally, we use the Sobolev interpolation inequality in order to show that ${\cal M}$ is {\cb  at the same time} an exponential attractor in  $\X$-metric{\cb .} 

}
Due to {\it Theorem~\ref{theoSP}} the exists a number $T>0$ such that for $S(T)$ the smoothing property \eqref{SP} holds. The existence of an exponential attractor for the {\it discrete} semigroup $S(nT), n\in\N${\cb ,} in the set ${\cal B}\subset X$ is a consequence of Remark 4.3 of \cite{bookMe}, we only need to verify that  $Z_{u_0}^{\left(\frac{\delta}{2}\right)}$ is uniformly (w.r.t. $u_0\in{\cal B}$) compactly embedded in  $Y_{u_0}^{(\delta)}$. 
% The latter means that the corresponding embedding family is uniform with respect to $u_0$ in the sense of Kolmogorov $\varepsilon$-entropy. 
Due to Lions-Aubin lemma, we have that
\begin{align}
H^1\left((t_1,T),H^{1}_{0}(\Omega),H^{-1}(\Omega)\right)
\subset\subset & L^2\left((t_1,T),\LTwo\right).\label{Emb2}
\end{align}
Therefore, we only need to study the (obviously continuous) canonical embedding
\begin{align*}
 i_{u_0}:{ W^{(1,2),2}}\left((t_1,t)\times\left\{M_0>{\delta}/{2}\right\}\right)\rightarrow L^2((t_1,t),H^1(\{M_0>\delta\})),\quad i_{u_0}u:=u|_{(t_1,t)\times \{M_0>\delta\}}.
\end{align*}
We proceed similar to \cite[Proposition A.5]{EfZelik}, where the case of a H\"older space embedded in the space of continuous functions on a smaller domain was considered. Let us define for each $u_0\in{\cal B}$ an extension operator
\begin{align*}
 &p_{u_0}: { W^{(1,2),2}}\left((t_1,t)\times\left\{M_0>{\delta}/{2}\right\}\right)\rightarrow { W^{(1,2),2}}\left((t_1,t)\times\Omega\right),\quad p_{u_0}u:=\begin{cases}\varphi_{M_0}u&\text{in }  \left\{M_0>{\delta}/{2}\right\},\\
 0&\text{in }\left\{M_0\leq\frac{\delta}{2}\right\}.                                                                                                                                                       \end{cases}
\end{align*}
Here $\varphi_{M_0}$ is any cutoff function which satisfies  \eqref{cutoff} for $\delta_0:=\frac{3\delta}{4}$ and $\delta_1:=\delta$. Since  $\varphi_{M_0}$ is a test function and compactly supported in $\left\{M_0>{\delta}/{2}\right\}$,  it follows that $\{p_{u_0}\}_{u_0\in{\cal B}}$ is a family of well defined continuous linear operators. Moreover, even though these operators are defined on different spaces, their norms are uniformly bounded:
\begin{align}
 \|p_{u_0}\|\leq A_p\quad\text{for all }u_0\in{\cal B}\label{Ap}
\end{align}
for some constant $A_p>0$. This is a consequence of property \eqref{nablaphi}. 
Note also that our choice of  cutoff function guaranties that
\begin{align}
 p_{u_0}u=u\quad\text{in }(t_1,t)\times\{M_0>\delta\}.\label{tranc}
\end{align}
Next, we define a restriction operator
\begin{align*}
 c_{u_0}:L^2((t_1,t),H^1(\Omega))\rightarrow L^2((t_1,t),H^1(\{M_0>\delta\})),\quad c_{u_0}u=u|_{(t_1,t)\times\{M_0>\delta\}}. 
\end{align*}
In this case, the value ranges depend upon $u_0$, but, clearly, 
\begin{align}
\|c_{u_0}\|\leq1\quad\text{for all }u_0\in{\cal B}.\label{c1}
\end{align}
Finally, we recall that due to the Lions-Aubin lemma the canonical embedding 
\begin{align*}
j:{ W^{(1,2),2}}\left((t_1,t)\times\Omega\right)\rightarrow L^2((t_1,t),H^1(\Omega)), \quad ju=u                                                                                                                              \end{align*}
is compact. Observe that due to \eqref{tranc}
\begin{align}
 i_{u_0}=c_{u_0}jp_{u_0}.
\end{align}
Using \eqref{Ap} and \eqref{c1}, we compute that
\begin{align}
&N_{r}\left(i_{u_0}\left(B\left(0,1;{ W^{(1,2),2}}\left((t_1,t)\times\left\{M_0>{\delta}/{2}\right\}\right)\right)\right);L^2((t_1,t),H^1(\{M_0>\delta\}))\right)\nonumber\\
=& N_{r}\left(c_{u_0}jp_{u_0}\left(B\left(0,1;{ W^{(1,2),2}}\left((t_1,t)\times\left\{M_0>{\delta}/{2}\right\}\right)\right)\right);L^2((t_1,t),H^1(\{M_0>\delta\}))\right)\nonumber\\
\leq & N_{r}\left(jp_{u_0}\left(B\left(0,1;{ W^{(1,2),2}}\left((t_1,t)\times\left\{M_0>{\delta}/{2}\right\}\right)\right)\right);L^2((t_1,t),H^1(\Omega))\right)\nonumber\\
\leq & N_{r}\left(j\left(B\left(0,A_p;{ W^{(1,2),2}}\left((t_1,t)\times\Omega\right)\right)\right);L^2((t_1,t),H^1(\Omega))\right),\label{Nrbnd}
\end{align}
where $B(0,1;V)$ denotes the unit ball in a normed space $V$, and $N_{r}(C;V)$ denotes the minimum number of balls of radius $r>0$ needed in order to cover a compact set $C\subset V$. Since the bound  on the right-hand side of \eqref{Nrbnd} is independent of $u_0$, the embedding family $\{i_{u_0}\}$ is indeed uniformly compact. Due to the above observation this carries over to the embedding $Z_{u_0}^{\left(\frac{\delta}{2}\right)}\subset\subset Y_{u_0}^{(\delta)}$. 
Therefore, with Remark 4.3 from \cite{bookMe} we conclude that there exists an exponential attractor ${\cal M}_T$ for the semigroup $S(nT), n\in\N${\cb ,} in ${\cal B}$ (equipped with the $H^{-1}(\Omega)\times\LTwo$-topology) and its dimension and attraction parameters depend only upon the parameters of the problem. 
As usual (see, e.g., \cite[Remark 3.2]{bookMe}), the required exponential attractor ${\cal M}\subset {\cal B}$ for the continuous-time semigroup $S(t), t\geq0$ can be defined via 
\begin{align*}
 {\cal M}:=\bigcup_{t\in[0,T]}S(t){\cal M}_T\subset {\cal B}.
\end{align*}
For this construction to work, it suffices (compare \cite[Remark 3.2]{bookMe}) to check that the map $(t,u_0)\mapsto S(t)u_0$ is, say, H\"older continuous on $[0,T]\times {\cal B}$.
% and ${\cal B}$ (equipped with the $H^{-1}(\Omega)\times\LTwo$-topology). 
{\cb The H\"older continuity w.r.t. $t$} is a consequence of the regularity result  \eqref{HoelB} and the Sobolev embedding theorem. The Lipschitz continuity with respect to $u_0$ is given by  the Lipschitz property \eqref{Lip}. In both cases such parameters as the H\"older/Lipschitz constants and the H\"older exponent can be chosen to depend upon the parameters of the problem. Consequently, ${\cal M}$ is indeed an exponential attractor for $S(t)$ in ${\cal B}$ equipped with the $H^{-1}(\Omega)\times\LTwo$-topology and its dimension and attraction parameters depend only upon the parameters of the problem.

Finally, we observe that due to the interpolation inequalities \eqref{SobInter}-\eqref{SobInter2} the canonical embedding of ${\cal B}$ equipped with (the norm-induced) $H^{-1}(\Omega)\times\LTwo$-metric and ${\cal B}$ equipped with (the norm-induced) $\X$-metric is H\"older continuous. Consequently, ${\cal M}$ is an exponential attractor for $S(t)$ in ${\cal B}$ equipped with $\X$-metric and, once again, its dimension and attraction parameters depend only upon the parameters of the problem. Combining this with the fact that  ${\cal B}$ is an exponentially absorbing set in $\X$ and its diameter and absorption parameters depend only upon the parameters of the problem, we conclude that ${\cal M}$ is an exponential attractor for $S(t)$ in $\X$ and its dimension and attraction parameters depend only upon the parameters of the problem, as required.
%  $Q_0,\Cl{rate1}>0$ such that 
% \begin{align}
% d_{H^{-1}(\Omega)\times\LTwo}(S(t){\cal B},{\cal M})\leq Q_0 e^{-\Cr{rate1} t}.\label{attr1}
% \end{align}
% Combining \eqref{attr1} with the interpolation inequalities \eqref{SobInter}-\eqref{SobInter2}, we arrive at the estimate
% \begin{align}
% d_{\X}(S(t){\cal B},{\cal M})\leq \Cr{Cinterp}R^{1-\theta_{\infty}}Q_0^{\theta_{\infty}} e^{-\theta_{\infty}\Cr{rate1} t}.\label{attr2}
% \end{align}
% Owing to the fact that ${\cal B}$ is exponentially absorbing, we conclude from \eqref{attr2}  that ${\cal M}$ is exponentially attracting in $\X$. 
{\it Theorem~\ref{mainexpo}} is thus proved.
\\\\\mbox{}\hfill\ensuremath{\Box}
\phantomsection
\printbibliography
\end{document}